\documentclass[11pt]{article}
\usepackage{amsmath, amsthm, amssymb, pdfsync, psfrag, verbatim,color}
\usepackage{hyperref}
\usepackage {mathrsfs, graphicx, newcent,wrapfig,multirow}
\usepackage[round]{natbib}
\usepackage{breakcites}
\usepackage[affil-it]{authblk}

\textheight 221 true mm \topmargin -7 true mm
\textwidth  158 true mm
\oddsidemargin 2.5true mm
\evensidemargin 2.5 true mm


\newcommand{\bbE}{\mathbb E}
\newcommand{\bbF}{\mathbb F}

\newcommand{\bbH}{\mathbb H}

\newcommand{\bbN}{\mathbb N}

\newcommand{\bbP}{\mathbb P}

\newcommand{\bbR}{\mathbb R}
\newcommand{\bbS}{\mathbb S}

\newcommand{\scA}{\mathcal A}
\newcommand{\scB}{\mathcal B}

\newcommand{\scE}{\mathcal E}
\newcommand{\scF}{\mathcal F}

\newcommand{\scL}{\mathcal L}

\newcommand{\veps}{\varepsilon}

\newcommand{\ang}[1]{\ensuremath{ \left \langle #1 \right \rangle }}
\newcommand{\norm}[1]{\ensuremath{\left\| #1 \right\|}}
\newcommand{\abs}[1]{\ensuremath{\left| #1 \right|}}

\DeclareMathOperator*{\esssup}{ess\,sup}

\newcommand{\indicator}[1]{\ensuremath{\mathbf{1}_{#1}}}

\newcommand{\half}{\frac{1}{2}}

\newcommand{\crl}[1]{\ensuremath{ \left\{ #1 \right\} }}
\newcommand{\edg}[1]{\ensuremath{ \left[ #1 \right] }}
\newcommand{\brak}[1]{\ensuremath{\left( #1 \right)}}

\newtheorem{theorem}{Theorem}[section]
\newtheorem{definition}[theorem]{Definition}
\newtheorem{proposition}[theorem]{Proposition}
\newtheorem{corollary}[theorem]{Corollary}
\newtheorem{lemma}[theorem]{Lemma}
\newtheorem{remark}[theorem]{Remark}
\newtheorem{example}[theorem]{Example}
\newtheorem{examples}[theorem]{Examples}
\newtheorem{foo}[theorem]{Remarks}
\newenvironment{Example}{\begin{example}\rm}{\end{example}}

\newenvironment{Remark}{\begin{remark}\rm}{\end{remark}}

\hyphenation{lin-e-ar-i-ty}
\hyphenation{non-de-crea-sing}

\title{Coupled FBSDEs with Measurable Coefficients and\\ its Application to Parabolic PDEs}
\author[1]{Kihun Nam}
\author[2]{Yunxi Xu}
\affil[1]{Department of Mathematics, Monash University, Australia}
\affil[1]{Centre for Quantitative Finance and Investment Strategies, Monash University, Australia}
\affil[2]{Department of Mathematics, Monash University, Australia}
\date{\today}

\begin{document}
	\maketitle
	\begin{abstract}
		Using purely probabilistic methods, we prove the existence and the uniqueness of solutions fora system of coupled forward-backward stochastic differential equations (FBSDEs) with measurable, possibly discontinuous coefficients. As a corollary, we obtain the well-posedness of semilinear parabolic partial differential equations (PDEs)
\begin{align*}
			&\mathcal{L} u(t,x)+F(t,x,u,\partial_x u)=0;\qquad u(T,x)=h(x)\\
			&\mathcal{L}:=\partial_t+\half\sum_{i,j=1}^m(\sigma\sigma^\intercal)_{ij}(t,x)\partial^2_{x_ix_j}
\end{align*}
		in the natural domain of the second-order linear parabolic operator $\scL$. We allow $F$ and $h$ to be discontinuous with respect to $x$. Finally, we apply the result to optimal policy-making for pandemics and pricing of carbon emission financial derivatives.
		\\[2mm]
		{\bf MSC 2020:} 60H10, 35K58\\[2mm]
		{\bf Keywords:} FBSDE, parabolic PDE, discontinuous coefficients.
	\end{abstract}
	\setcounter{equation}{0}
	\section{Introduction}
	In this study, we prove the existence and uniqueness of a solution for the coupled forward-backward stochastic differential equations (FBSDEs)
	\begin{equation}\label{intro:fbsde}
		\begin{aligned}
			dX_t&=\tilde g(t,X_t,Y_t,Z_t)dt+\sigma(t,X_t) dW_t; &X_0&=x\in\bbR^m\\
			dY_t&=-f(t,X_t,Y_t,Z_t)dt+Z_tdW_t;&Y_T&=h(X_T)
		\end{aligned}
	\end{equation}
	where $W$ is an $n$-dimensional Brownian motion, the coefficients $\tilde g, \sigma, f,$ and $h$ are $\bbR^m$-, $\bbR^{m\times n}$-, $\bbR^d$-, and $\bbR^d$-valued deterministic measurable functions, respectively. In particular, we are interested in the case in which these coefficients are discontinuous.
	
	\cite{bismut1973conjugate} introduced FBSDE as a dual problem of stochastic control, which corresponds to the Pontryagin principle. \cite{pardoux1990adapted,pardoux1992backward} proved the existence and uniqueness of the solution and its relationship with partial differential equations (PDEs) when the coefficients are non-linear Lipschitz and the forward and backward equations are decoupled. The well-posedness of the coupled FBSDE was studied using various methods (see \cite{ma1999forward} and \cite{ma2015well} and references therein for a survey).
	
	Three methods are commonly used to solve coupled FBSDEs: 1) contraction mapping, 2) the four step scheme, and 3) the method of continuation. The contraction mapping method was first used by \cite{antonelli1993backward}, and then \cite{pardoux1999forward} detailed the method. This method works well when $T$ is sufficiently small, but difficult to apply for an arbitrarily large $T$. The four step scheme, introduced by \cite{ma1994solving}, uses the concept of ``decoupling field'' to find an FBSDE solution by solving a corresponding PDE. The idea of the decoupling field was extended to FBSDEs with random coefficients by \cite{ma2015well}. Finally, the method of continuation, which was developed in \cite{hu1995solution,yong1997finding, peng1999fully,yong2010forward}, can prove the well-posedness of FBSDEs with random coefficients for arbitrary time intervals under  ``monotonicity conditions'', which is often not easy to verify.
	
	As our FBSDE has deterministic coefficients, the most natural approach would be the four step scheme by assuming a decoupling field $u$ that satisfies $Y_t=u(t, X_t)$. If the coefficients are sufficiently smooth, then the corresponding PDE 
	\begin{equation}\label{intro:pde}
		\begin{aligned}
			&\scL:=	\partial_t+\half\sum_{i,j=1}^m(\sigma\sigma^\intercal)_{ij}(t,x)\partial^2_{x_ix_j}, \qquad \nabla:=(\partial_{x_1},\partial_{x_2},\cdots,\partial_{x_m}),\\
			&\scL u(t,x)+ \nabla u(t,x) \tilde g(t,x,u,\nabla u\sigma)+f(t,x,u,\nabla u\sigma)=0;\qquad u(T,x)=h(x)
		\end{aligned}
	\end{equation}
	would have a solution $u\in C^{1,2}([0,T]\times\bbR^m;\bbR^d)$ 
	and we only need to find a strong solution of SDE
	\begin{align*}
		dX_t=\tilde g(t, X_t,u(t,X_t),(\nabla u\sigma)(t,X_t))dt+\sigma(t,X_t)dW_t;\qquad X_0=x.
	\end{align*}
	However, when the coefficients are discontinuous in $x$, additional requirements are needed to obtain sufficient regularity of $u$. 
	
	Parabolic PDEs with measurable coefficients have been studied broadly in PDE literatures (see \cite{krylov1999analytic, maugeri2000elliptic,kim2007parabolic,krylov2007parabolic, kim2017lq} and references therein). Most of the previous literatures on PDE with discontinuous coefficients focused on the viscosity solution or the class of solutions in the Sobolev space. 
	The viscosity solution cannot be applied if $d>1$, because the comparison principle does not hold in general. On the other hand, the decoupling field $u$ does not have to be in the Sobolev space $W^{1,2}_p([0,T]\times\bbR^m)$; $u(t,X_t)$ only need to be an It\^o process. When $|b|+|\sigma\sigma^\intercal|$ is bounded and $\sigma\sigma^\intercal$ is a continuous function with uniform ellipticity, \cite{chitashvili1996generalized} and \cite{mania2001semimartingale} proved that $u\in{\rm Dom}(\scL)$ for an operator $\scL$ defined by the closure of $\partial_t+\half\sum_{i,j}(\sigma\sigma^\intercal)_{ij}\partial^2_{x_ix_j}$, is the necessary and sufficient condition for $Y_t=u(t,X_t)$ to be an It\^o process. Since $u\in{\rm Dom}(\scL)$ implies that $u$ is differentiable in $x$, the space ${\rm Dom}(\scL)$ is the natural choice to seek for a solution of \eqref{intro:pde}.
	
	This study proves the existence of a solution $(X,Y,Z)$ and a measurable function $u\in {\rm Dom}(\scL)$ such that $Y_t=u(t,X_t)$. The function $u$ necessarily satisfies \eqref{intro:pde}. In this sense, our method can be seen as a probabilistic generalization of the four step scheme for measurable coefficients. On the other hand, we can also view our article as a study of parabolic PDEs with measurable coefficients with a slightly weaker class of solutions than the Sobolev solutions.
	
	The FBSDE \eqref{intro:fbsde} with measurable discontinuous coefficients has been studied previously. In particular, when $\tilde g(t,x,y,z)$ does not depend on $(y,z)$, we only need to focus on the backward equation of \eqref{intro:fbsde} as the system decouples. \cite{el1997backward} proved the existence and uniqueness of a solution when $f(t,x,y,z)$ is Lipschitz with respect to $(y,z)$ and \cite{hamadene1997bsdes} proved the existence of a solution when $f(t,x,y,z)$ is continuous with respect to $(y,z)$. For fully coupled FBSDEs, \cite{carmona2013singular} proved the existence and uniqueness of a solution for a FBSDE appearing from carbon emission derivative market when $h$ is a step function. More general result is obtained by \cite{luo2020strong}, who used the mollification and PDE techniques to prove the existence of a solution for \eqref{intro:fbsde}. Their condition requires that $\sigma$ is a constant, $h$ is bounded, and either (i) $\tilde g$ and $f$ have linear growth on $y$, uniformly in $(t,x,z)$, or (ii) $h$ is Lipschitz and $\tilde g$ and $f$ have linear growth on $(y,z)$, uniformly in $(t,x)$. On the other hand, \cite{chen2018forward} studied the case where $\tilde g(t,x,y,z)$ is a step function in $y$.
	
	This study provides sufficient conditions for the existence and uniqueness of a strong solution for \eqref{intro:fbsde} with deterministic measurable coefficients, which can be discontinuous, when $\sigma(t,x)$ is uniformly non-degenerate. In particular, our results generalize \cite{luo2020strong} in terms of the growth of coefficients and nonconstant $\sigma$ using the simpler measure-change technique. In short, we decoupled the FBSDE \eqref{intro:fbsde} with the Girsanov transform, and then verified that it is actually a strong solution using the existence of the Markovian representation $(Y_t, Z_t)=(u(t,X_t),d(t,X_t))$. While the Markovian representation resembles the decoupling field in the four step scheme, which is defined through PDE, the existence of $u$ and $d$ stems from a purely probabilistic argument based on \cite{cinlar1980semimartingales}.
	
	This article consists of four sections. In Section 2, we present our main result, i.e., Theorem \ref{mainthm} and provide a corollary stating the relationship with PDE. In Section 3, we present two applications of this study. In Section 3.1, we study the optimal control of the spread of an infectious disease when there are various degrees of medical service capacities, such as the number of hospital beds and medicine supply. In Section 3.2, we present another application, the pricing of carbon emission allowance derivatives. In Section 4, we prove Theorem \ref{mainthm}.
	\section{Existence and Uniqueness of Solution}
	Let $(\Omega,\scF,\bbP)$ be a filtered probability space with an $n$-dimensional Brownian motion $W$ and its augmented filtration $\bbF$ generated by $W$. For a matrix $A$, we denote $\abs{A}:=\sqrt{{\rm tr}(A A^\intercal)}$ and consider a vector as a column matrix. As usual, we assume Borel sigma-algebra on Euclidean spaces. 
	Let $\bbS^p(E)$ and $\bbH^p(E)$ for a Euclidean space $E$ be the space of the adapted $E$-valued processes $X$ with
	\begin{align*}
		\norm{X}_{\bbS^p}^p:=\bbE\sup_{t\in[0,T]}|X_t|^p\text{ and } \norm{X}_{\bbH^p}^p:=\bbE\edg{\int_0^T|X_t|^2dt}^{p/2}.
	\end{align*}
For a function $u\in C^{0,1}\brak{[0,T]\times \bbR^m;\bbR^d}$, we denote $\nabla u:=\brak{\partial_{x_1} u, \partial_{x_2} u,\cdots, \partial_{x_m} u}$.
	Let
	\begin{align*}
		b&:[0,T]\times\bbR^m\to\bbR^{m}\\
		\sigma&:[0,T]\times\bbR^m\to\bbR^{m\times n}\\
		f&:[0,T]\times\bbR^m\times\bbR^d\times\bbR^{d\times n}\to\bbR^d\\
		g&:[0,T]\times\bbR^m\times\bbR^d\times\bbR^{d\times n}\to\bbR^n\\
		h&:\bbR^m\to\bbR^d.
	\end{align*}
	be (jointly) measurable functions. 
	Unless otherwise stated, we assume the following conditions:
	\begin{itemize}
		\item $\sigma$ is uniformly nondegenerate, that is, there exists a constant $\veps>0$ such that
		\[
		\veps^{-1}|x'|^2\leq (x')^\intercal(\sigma\sigma^\intercal)(t,x) x'\leq \veps|x'|^2\\
		\]
		for all $x'\in\bbR^m$ and $(t,x)\in[0,T]\times\bbR^m$.
		\item There exists a positive constant $\kappa$ such that, 
		$$
		|b(t,0)|+\sup_{|x-x'|\leq 1}|b(t,x)-b(t,x')|\leq \kappa
		$$
		for all $t\in[0,T], x,x'\in\bbR^m$.
	\end{itemize}
	
	\begin{Remark}
		Under the non-degeneracy assumption on $\sigma$, we have
		\begin{align*}
			b(t,x)+\sigma(t,x)g(t,x,y,z)
			&=\tilde b(t,x)+\sigma(t,x)\tilde g(t,x,y,z),
		\end{align*}
		where $\tilde b=b-k$ and $\tilde g=\sigma^\intercal(\sigma\sigma^\intercal)^{-1}k+g$. This adds flexibility to the conditions described below.
	\end{Remark}
	
	\begin{Remark}\label{lineargrowth}
		Note that $b(t,x)$ can exhibit linear growth in $x$. For example, let $b(t,x)=b_0(t,x)+b_1(t,x)$ where $\sup_{t\in[0,T]}|b_0(t,0)+b_1(t,0)|=1$, $b_0(t,x)$ is H\"older continuous in $x$, and $b_1(t,x)$ is bounded. 
	\end{Remark}
	
	We use the following short-hand notations for different conditions on the coefficients, where $
	\bar f(t,x,y,z):=f(t,x,y,z)+zg(t,x,y,z)$, $C$ and $r$ are nonnegative constants, $\theta:\bbR\to\bbR_+$ is a strictly increasing function, and $\rho_r:\bbR_+\to\bbR_+$ is a nondecreasing function with $\rho_r\equiv 0$ for $r> 0$:
	\begin{itemize}
		
		\item[{\rm (F1)}] $|b(t,x)|\leq C$ and $\sigma(t,x)$ is locally Lipschitz with respect to $x$.
		\item[{\rm (F2)}] $|b(t,x)|\leq C$, $m=n=1$, and either
		\begin{itemize}
			\item[(i)] \(\int \frac{du}{\theta(u)}=\infty\) and
			\(
			\abs{\sigma(t,x)-\sigma(t,y)}^2\leq \theta(|x-y|),
			\)
			or
			\item[(ii)]  $\theta$ is bounded and \(
			\abs{\sigma(t,x)-\sigma(t,y)}^2\leq \abs{\theta(x)-\theta(y)}.
			\)
		\end{itemize}
		\item[{\rm (F3)}] $\sigma(t,x)$ is a constant matrix.
		
		\item[{\rm (B1)}] $|h(x)|\leq C(1+|x|^r)$, $\bar f(t,x,y,z)$ is continuous in $(y,z)$, and 
		\begin{equation*}
			\begin{aligned}
				|f(t,x,y,z)|&\leq C(1+|x|^r+|y|+|z|)\\
				|g(t,x,y,z)|&\leq C(1+\rho_r(|y|)).
			\end{aligned}
		\end{equation*}
		
		\item[{\rm (B2)}] $|h(x)|\leq C$, $\bar f^i(t,x,y,z)=\tilde f^i(t,x,z^i)+\hat f^i(t,x,y,z)$ such that 
		\begin{equation*}
			\begin{aligned}
				|\hat f(t,x,y,z)|&\leq C(1+|y|)\\
				|\tilde f(t,x,z)|&\leq C|z|^2\\
				|g(t,x,y,z)|&\leq C(1+\rho_r(|y|))\\
				\abs{\hat f(s,x,y,z)-\hat f(s,x,y',z')}&\leq C(|y-y'|+|z-z'|),\forall y,y'\in\bbR^d, z,z'\in\bbR^{d\times n}\\
				\abs{\tilde f(s,x,z_1)-\tilde f(s,x,z_2)}&\leq C(1+|z_1|+|z_2|)|z_1-z_2|,\forall z_1,z_2\in\bbR^{d\times n}.
			\end{aligned}
		\end{equation*}
		\item[{\rm (B3)}] $d=1$, $|h(x)|\leq C$, $\bar f(t,x,y,z)$ is continuous with respect to $(y,z)$, and 
		\begin{equation*}
			\begin{aligned}
				|f(t,x,y,z)|&\leq C(1+|y|+|z|^2)\\
				|g(t,x,y,z)|&\leq C(1+\rho_r(|y|)).
			\end{aligned}
		\end{equation*}
		\item[{\rm (B4)}] $|h(x)|\leq C(1+|x|^r)$, $\bar f(t,x,y,z)$ is continuous in $(y,z)$, and \begin{equation*}
			\begin{aligned}
				|f^i(t,x,y,z)|&\leq C(1+|x|^r+|y^i|) \text{ for all }i=1,2,..., d\\
				|g(t,x,y,z)|&\leq C(1+|x|+\rho_r(|y|)).
			\end{aligned}
		\end{equation*}
		\item[{\rm (U1)}] $\bar f(t,x,y,z)$ is globally Lipschitz continuous with respect to $(y,z)$, or
		\item[{\rm (U2)}] $d=1$, $|h(x)|\leq C$, $\bar f(t,x,y,z)$ is differentiable with respect to $(y,z)$, and for any $M,\veps>0$, there exist $l_M,l_\veps\in L^1([0,T];\bbR_+), k_M\in L^2([0,T];\bbR_+)$, and $C_M>0$ such that $\bar f$ satisfies
		\begin{align*}
			\abs{\bar f(t,x,y,z)}&\leq l_M(t)+C_M |z|^2\\
			\abs{\partial_z\bar f(t,x,y,z)}&\leq k_M(t)+C_M|z|\\
			\abs{\partial_y\bar f(t,x,y,z)}&\leq l_\veps(t)+\veps|z|^2
		\end{align*}
		for all $(t,x,y,z)\in[0,T]\times\bbR^m\times[-M,M]\times\bbR^{1\times n}$.
	\end{itemize}
	
	\begin{theorem}\label{mainthm}
		Assume that there exist nonnegative constants $C, r>0$, a strictly increasing function $\theta:\bbR\to\bbR_+$, and a nondecreasing function $\rho_r:\bbR_+\to\bbR_+$ with $\rho_r\equiv 0$ for $r>0$ that satisfies either of the following conditions: 
		\begin{itemize}
			\item one of {\rm (F1), (F2), (F3)} and one of {\rm (B1), (B2), (B3)} hold for any $(t,x,y,z)\in[0,T]\times\bbR^m\times\bbR^d\times\bbR^{d\times n}$.
			\item {\rm (F3)} and {\rm (B4)} hold for any $(t,x,y,z)\in[0,T]\times\bbR^m\times\bbR^d\times\bbR^{d\times n}$.
		\end{itemize}
		Then, FBSDE 
		\begin{equation}\label{main:fbsde}
			\begin{aligned}
				dX_t&=\brak{b(t,X_t)+\sigma(t,X_t) g(t,X_t,Y_t,Z_t)}dt+\sigma(t,X_t) dW_t; &X_0&=x\\
				dY_t&=-f(t,X_t,Y_t,Z_t)dt+Z_tdW_t;&Y_T&=h(X_T)
			\end{aligned}
		\end{equation}
		has a strong solution in $\bbH^2(\bbR^m)\times\bbH^2(\bbR^d)\times\bbH^{2}(\bbR^{d\times n})$.
		In particular, if $r=0$, then the FBSDE has a strong solution $(X,Y,Z)$ such that $\esssup_{\omega\in\Omega}\sup_{t\in[0,T]}|Y_t(\omega)|<\infty$. In addition, if either {\rm (U1), (U2),} or {\rm (B2)} holds, then the solution is unique.
	\end{theorem}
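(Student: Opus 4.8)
The plan is to decouple \eqref{main:fbsde} by a Girsanov change of measure, solve the resulting decoupled forward SDE and BSDE separately, and then use a purely probabilistic Markovian representation to glue the pieces into a genuine strong solution. The key preliminary observation is that along any candidate solution the drift correction $g$ is bounded: when $r>0$ we have $\rho_r\equiv 0$ so $\abs{g}\leq C$ outright, while when $r=0$ the process $Y$ is bounded, whence $\rho_r(\abs{Y})$ is bounded. Consequently $\mathcal{E}\brak{-\int_0^\cdot g(s,X_s,Y_s,Z_s)\cdot dW_s}$ satisfies Novikov's condition and defines an equivalent measure $\bbQ$ under which $\tilde W_t:=W_t+\int_0^t g(s,X_s,Y_s,Z_s)\,ds$ is a Brownian motion. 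Rewriting \eqref{main:fbsde} in terms of $\tilde W$ removes the coupling term from the forward equation and replaces $f$ by $\bar f=f+zg$ in the backward equation, yielding the decoupled system
\begin{align*}
	dX_t&=b(t,X_t)\,dt+\sigma(t,X_t)\,d\tilde W_t; &X_0&=x\\
	dY_t&=-\bar f(t,X_t,Y_t,Z_t)\,dt+Z_t\,d\tilde W_t; &Y_T&=h(X_T).
\end{align*}

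Under $\bbQ$ the forward SDE no longer depends on $(Y,Z)$, so I would solve it first. Uniform nondegeneracy of $\sigma$ together with one of {\rm (F1)}, {\rm (F2)}, {\rm (F3)} is precisely what is needed to produce a strong solution $X$ with discontinuous, possibly linearly growing drift $b$: the constant-$\sigma$ case {\rm (F3)} with bounded measurable drift falls under Zvonkin--Veretennikov-type results, the locally Lipschitz case {\rm (F1)} under classical theory, and the one-dimensional case {\rm (F2)} under Yamada--Watanabe moduli. Since $(t,X_t)$ is then a Markov process under $\bbQ$, I would solve the decoupled BSDE driven by $X$ with generator $\bar f$, invoking the existence theorem matched to the growth hypothesis: continuous linear growth {\rm (B1)} via Lepeltier--San Mart\'in, the quadratic cases {\rm (B2)}, {\rm (B3)} via Kobylanski and Briand--Hu for bounded terminal data, and the diagonal structure {\rm (B4)} by the same machinery. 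Standard a priori estimates give $(X,Y,Z)\in\bbH^2(\bbR^m)\times\bbH^2(\bbR^d)\times\bbH^2(\bbR^{d\times n})$ and, when $r=0$, the bound $\esssup_{\omega\in\Omega}\sup_{t\in[0,T]}\abs{Y_t}<\infty$, which retroactively justifies the boundedness of $g$ used above.

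The crux is the Markovian representation. Because the space--time process $(t,X_t)$ is Markov under $\bbQ$ and the BSDE is driven only by $X$, I expect measurable functions $u$ and $d$ with $Y_t=u(t,X_t)$ and $Z_t=d(t,X_t)$; the existence of such a representation, together with $u\in{\rm Dom}(\scL)$, comes not from PDE regularity but from the results of \cite{cinlar1980semimartingales} on semimartingales that are functions of a Markov process. This is the step I anticipate to be the main obstacle, since $Z$ is only defined up to a $dt\otimes d\bbP$-null set and its realization as a genuine function of the state requires the full strength of that theory. Once the representation is in hand, substituting it into the original $\bbP$-dynamics turns the forward equation into the decoupled Markovian SDE
\begin{align*}
	dX_t=\brak{b(t,X_t)+\sigma(t,X_t)g\brak{t,X_t,u(t,X_t),d(t,X_t)}}\,dt+\sigma(t,X_t)\,dW_t;\qquad X_0=x,
\end{align*}
whose drift is bounded-measurable plus $b$; under nondegeneracy and {\rm (F1)}--{\rm (F3)} this admits a strong solution, so $X$, and hence $Y=u(\cdot,X)$ and $Z=d(\cdot,X)$, are adapted to $\bbF$. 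This delivers the claimed strong solution of \eqref{main:fbsde}.

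For uniqueness under {\rm (U1)}, {\rm (U2)}, or {\rm (B2)} I would argue pathwise. In the globally Lipschitz case {\rm (U1)}, pathwise uniqueness of the decoupled forward SDE combines with the uniqueness of Lipschitz BSDEs. In the quadratic regimes {\rm (U2)} and {\rm (B2)}, I would linearize $\bar f$ along two solutions, using the differentiability and the structural bounds on $\partial_y\bar f$ and $\partial_z\bar f$ to write the difference as a linear BSDE with coefficients that are integrable thanks to the a priori essential-sup bound on $Y$; a Girsanov argument then absorbs the quadratic term, forcing the two solutions to coincide.
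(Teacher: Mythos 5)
Your strategy is the same as the paper's: decouple by Girsanov, solve the forward SDE under (F1)--(F3) and the BSDE with generator $\bar f=f+zg$ under (B1)--(B4), obtain the Markovian representation $(Y,Z)=(u(\cdot,X),d(\cdot,X))$ from \cite{cinlar1980semimartingales}, and then recover a strong solution of the coupled system by solving the re-coupled Markovian forward SDE (the paper packages this as conditions (H1)--(H4) and Lemma \ref{girsanovlemma}), with uniqueness reduced to uniqueness of the decoupled BSDE.

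There is, however, one concrete gap. Your ``key preliminary observation'' that $g$ is bounded along the solution is false in the second case of the theorem, (F3)$+$(B4): there $|g(t,x,y,z)|\leq C(1+|x|+\rho_r(|y|))$, so even when $\rho_r\equiv 0$ (or when $Y$ is bounded) the drift correction still grows linearly in $x$ and Novikov's condition does not apply directly. The paper handles this (Proposition \ref{B4H23}) by showing $|F_t|\leq C'''\bigl(1+\max_{s\leq t}|W_s|\bigr)$ for the constant-$\sigma$ forward process and then invoking the Bene\v{s} condition (Corollary III.5.16 of \cite{karatzas1998brownian}) to get the martingale property (H3); your argument as written would fail to produce the equivalent measure in exactly this case. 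A second, smaller point: for the merely continuous generators in (B1) and (B3) the Markovian representation does not come for free from \cite{cinlar1980semimartingales} (which handles the Lipschitz/Picard iterates); one needs the $L^2$-domination of the transition laws via Aronson-type Gaussian bounds (Proposition \ref{L2dominance}, after \cite{aronson1967bounds} and \cite{hamadene1997bsdes}) to pass measurable representations through the monotone approximation. You correctly flag this step as the main obstacle but leave the mechanism unspecified, so it should be regarded as deferred rather than resolved.
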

	\begin{proof}
		The proof is given in Section 4.
	\end{proof}
	Under the assumptions of Theorem \ref{mainthm}, we have measurable functions $u:[0,T]\times\bbR^m\to\bbR^d$ and $d:[0,T]\times\bbR^m\to\bbR^{d\times n}$ such that $Y_t=u(t,X_t),Z_t=d(t,X_t)$ for almost every $(t,\omega)\in[0,T]\times\Omega$. To state that $u$ is a solution of a parabolic PDE, let us define $V^{\scL}_\mu(loc)$, the class of $\scL$-differentiable functions.
	\begin{definition}[\cite{chitashvili1996generalized}]
		Let $\mu(ds,dy):=p(0,x,s,y)dsdy$, where $p$ is the  transition density corresponding to SDE $dX_t=\sigma(t,X_t)dW_t$. Further, for a function $f\in C^{1,2}$, we define
		\begin{align*}
			\scL f&:=\partial_tf+\half\sum_{i,j=1}^m(\sigma\sigma^\intercal)_{ij}(t,x)\partial^2_{x_ix_j}f.
		\end{align*}
		We say $u$ belongs to $V^{\scL}_\mu(loc)$, if there exists a sequence of functions $(u_n)_{n\geq 1}\subset C^{1,2}$, a sequence of bounded measurable domains $D_1\subset D_2\subset\cdots$ with $(0,x)\in D_1$ and $\cup_{n\in\bbN} D_n=[0,T]\times\bbR^m$, and a measurable locally $\mu$-integrable function $\scL u$ such that
		\begin{itemize}
			\item $\tau_k:=\crl{t>0:(t,X_t)\notin D_n}$ are stopping times with $\tau_n\nearrow T$.
			\item For each $k\geq 1$,
			\begin{align*}
				\sup_{s\leq\tau_k}\abs{u_n(s,X_s)-u(s,X_s)}\xrightarrow{n\to\infty}0 \qquad \text{a.s.}\\
				\iint_{D_k}\abs{\scL u_n(s,X_s)-\scL u(s,X_s)}\mu(ds,dx)\xrightarrow{n\to\infty}0.
			\end{align*}
		\end{itemize}
		Then, we define the $\scL$-derivative of $u$ by $\scL u$. 	Moreover, if $u\in V^{\scL}_\mu(loc)$, then there exists $\nabla u(t,x)$ such that
		\begin{align*}
			\iint_{D_k}\abs{\nabla u_n(s,X_s)-\nabla u(s,X_s)}^2\mu(ds,dx)\xrightarrow{n\to\infty}0.
		\end{align*}
		We define $\nabla u$ to be the generalized gradient of $u$.
	\end{definition}
	For an open set $D\subset\bbR^{1+m}$, let $W^{1,2}_p(D)$ be the
	completion of $C^{1,2}((0,T)\times D)$ with respect to the norm
	\[
	\norm{u}:=\sup_{(t,x)\in\bar D}|u(t,x)|+\norm{\partial_t u}_{L^p}+\norm{\nabla u}_{L^p}+\norm{\partial_{xx}^2 u}_{L^p}.
	\]
	We define $W^2_p(D)$ similarly.	Note that $W^{1,2}_p(D)$ is equivalent to the usual Sobolev space for continuous $u$ if $D$ has smooth boundary and $p\geq m+1$: see p47 of \cite{krylov1980controlled}.  
	\begin{Remark}[Corollary 1 of \cite{chitashvili1996generalized}]
		For $p\geq m+1$,
		\[W^{1,2}_p(D)\subset V^\scL_\mu(loc)\]
		for any bounded measurable domain $D\subset\bbR^m$.
	\end{Remark}
	
	We have the following corollary.
	\begin{corollary}\label{maincor}
		Assume the existence conditions in Theorem \ref{mainthm}. In addition, we assume that $\sigma\sigma^\intercal$ is continuous. Then, there exists $u\in V^{\scL}_\mu(loc)$ that satisfies
		\begin{equation}\label{pde}
			\begin{aligned}
				\scL u(t,x)&+ \nabla u(t,x) \brak{b(t,x)+\sigma(t,x) g(t,x,u,\nabla u\sigma)}+f(t,x,u,\nabla u\sigma)=0;\qquad u(T,x)=h(x).
			\end{aligned}
		\end{equation}
		If the uniqueness condition in Theorem \ref{mainthm} holds as well, then there is a unique $u\in V^{\scL}_\mu(loc)$ satisfying \eqref{pde}.
		Additionally, if assume the conditions
		\begin{equation}\label{regpde}
			\begin{aligned}
				\bullet&\qquad h\in W_p^2(\bbR^m),\\
				\bullet&\qquad b(t,x) \text{ is uniformly bounded in } (t,x)\\
				\bullet&\qquad (\sigma\sigma^\intercal)(t,x) \text{ is uniformly continuous with respect to $x$ for each }t\in[0,T],\text{ and }\\
				\bullet&\qquad \int_{[0,T]\times\bbR^{m}}\sup_{z\in\bbR^{d\times n}}\brak{|f(t,x,y,z)|^p}dtdx\leq C(1+\rho_r(|y|))\text{ and } |g(t,x,y,z)|\leq C(1+\rho_r(|y|)),
			\end{aligned}
		\end{equation}
		then $u\in W^{1,2}_p([0,T)\times \bbR^m)$.
	\end{corollary}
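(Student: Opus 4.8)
The plan is to bootstrap directly from Theorem \ref{mainthm}. Its proof (Section 4) already supplies a strong solution $(X,Y,Z)\in\bbH^2(\bbR^m)\times\bbH^2(\bbR^d)\times\bbH^2(\bbR^{d\times n})$ together with the measurable functions $u$ and $d$ for which $Y_t=u(t,X_t)$ and $Z_t=d(t,X_t)$ hold for almost every $(t,\omega)$. The only genuinely new analytic input is the semimartingale characterization of \cite{chitashvili1996generalized} and \cite{mania2001semimartingale}, which is precisely why the extra hypothesis that $\sigma\sigma^\intercal$ be continuous is imposed: together with uniform non-degeneracy it guarantees that the transition density $p$ defining $\mu$ exists, is strictly positive for $s>0$, and that the semimartingale/$\scL$-differentiability dictionary is available.

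First I would establish $u\in V^{\scL}_\mu(loc)$. Since $(X,Y,Z)$ solves \eqref{main:fbsde}, the process $Y=u(t,X_t)$ is a continuous semimartingale. To match the driftless diffusion $dX_t=\sigma(t,X_t)dW_t$ underlying $\mu$, I would remove the drift $b+\sigma g$ of $X$ by a Girsanov change to an equivalent measure $\tilde\bbP$, under which $X$ is a driftless diffusion with generator $\half\sum(\sigma\sigma^\intercal)_{ij}\partial^2_{x_ix_j}$ and $Y$ remains a semimartingale. The characterization of \cite{chitashvili1996generalized,mania2001semimartingale} then asserts that $Y=u(t,X_t)$ is an \Ito\ process if and only if $u\in V^{\scL}_\mu(loc)$, and simultaneously produces the $\scL$-derivative $\scL u$ and the generalized gradient $\nabla u$. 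Because $b$ is allowed to grow linearly (Remark \ref{lineargrowth}), the Girsanov density need not satisfy a global Novikov condition, so I would run this argument on the localizing domains $D_k$ and stopping times $\tau_k\nearrow T$ built into the very definition of $V^{\scL}_\mu(loc)$, on each of which $b$ is bounded. I expect this to be the main obstacle: legitimately invoking the cited characterization despite the drift of $X$ and the possible unboundedness of $b$ is where the measure change, the localization through the $D_k$, and the continuity of $\sigma\sigma^\intercal$ must be combined carefully.

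Next I would read off the PDE using the generalized \Ito\ formula for $u\in V^{\scL}_\mu(loc)$ applied to $u(t,X_t)$ under $\bbP$, which gives
\begin{align*}
u(t,X_t) &= u(0,x) + \int_0^t \brak{\scL u + \nabla u\,(b + \sigma g)}(s,X_s)\,ds + \int_0^t (\nabla u\,\sigma)(s,X_s)\,dW_s,
\end{align*}
where the first-order drift term is legitimate because the generalized gradient exists and the drift is absolutely continuous. Comparing this with $dY_t=-f(t,X_t,Y_t,Z_t)dt+Z_t dW_t$ and invoking uniqueness of the continuous-semimartingale decomposition, the martingale parts force $d(t,x)=\nabla u(t,x)\sigma(t,x)$ and the finite-variation parts force $\scL u+\nabla u(b+\sigma g)+f=0$, both $\mu$-a.e.; substituting $Y_t=u$ and $Z_t=\nabla u\sigma$ into $f$ and $g$ yields exactly \eqref{pde}, while $Y_T=h(X_T)$ gives $u(T,\cdot)=h$.

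For uniqueness, I would run the \Ito\ formula in reverse: any $v\in V^{\scL}_\mu(loc)$ solving \eqref{pde} generates a solution of \eqref{main:fbsde} driven by the forward SDE with coefficient $g(\cdot,v,\nabla v\sigma)$, so the uniqueness part of Theorem \ref{mainthm} (under {\rm (U1)}, {\rm (U2)}, or {\rm (B2)}) forces $v(t,X_t)=u(t,X_t)$ a.s., and the positivity of $p$ promotes this to $v=u$ $\mu$-a.e. Finally, for the Sobolev statement I would freeze the lower-order data: under \eqref{regpde} the function $u$ is bounded (the $r=0$ case of Theorem \ref{mainthm}) and $|g|\le C(1+\rho_r(|u|))$ is bounded, so writing $G:=-\nabla u\,(b+\sigma g(\cdot,u,\nabla u\sigma))-f(\cdot,u,\nabla u\sigma)$ the equation becomes the linear parabolic problem $\scL u=G$, $u(T,\cdot)=h\in W^2_p(\bbR^m)$, with $b$ bounded, $\sigma\sigma^\intercal$ uniformly continuous in $x$, and the $L^p$-norm of the source controlled by the prescribed bound $\int\sup_z|f(t,x,u,z)|^p\,dt\,dx\le C$. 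The $L^p$-solvability theory for parabolic operators with continuous (VMO) leading coefficients of \cite{krylov2007parabolic,kim2007parabolic}, with the first-order term absorbed by interpolation thanks to the boundedness of $b+\sigma g$ and a localization-and-patching argument over the unbounded domain, then upgrades $u$ to $W^{1,2}_p([0,T)\times\bbR^m)$.
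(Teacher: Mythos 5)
Your proposal follows essentially the same route as the paper: the membership $u\in V^{\scL}_\mu(loc)$ and the identification of the PDE are obtained from the semimartingale characterization of \cite{chitashvili1996generalized}, and the Sobolev upgrade comes from linear parabolic $L^p$ theory with VMO leading coefficients \`a la \cite{krylov2007parabolic}. The difference is one of economy and of one technical formulation. For the first part, the paper simply invokes Theorem 1 of \cite{chitashvili1996generalized} as an immediate consequence, whereas you reconstruct its content (Girsanov removal of the drift, localization via the $D_k$, matching of martingale and finite-variation parts); this is a legitimate expansion of what the paper compresses, and your identification of why the continuity of $\sigma\sigma^\intercal$ is needed is correct. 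For the Sobolev step, the paper keeps the first-order term inside the operator, setting $L:=\scL+\sum_i(b+\sigma g)^i\partial_{x_i}$, homogenizes the terminal condition by passing to $\tilde u:=u-h$ (which is why $h\in W^2_p$ is assumed), and applies Krylov's theorem to $L\tilde u=F$ with $F:=f(\cdot,\cdot,u,\nabla u\sigma)-Lh\in L^p$. You instead move $\nabla u\,(b+\sigma g)$ to the right-hand side as part of the source $G$; as written this is circular, since $G\in L^p$ presupposes $\nabla u\in L^p$, which is part of what is being proved. You flag the fix (``absorbed by interpolation''), and that can be made to work via an a priori estimate and approximation, but the paper's formulation avoids the issue entirely because the $L^p$ theory it cites already accommodates bounded measurable first-order coefficients. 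I would also note that boundedness of $g$ holds in both cases $r=0$ (via boundedness of $u$) and $r>0$ (via $\rho_r\equiv 0$), not only the first, which is how the paper justifies the reduction.
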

	\begin{proof}
		The existence of $u\in V^{\scL}_\mu(loc)$ satisfying \eqref{pde} is an immediate consequence of Theorem 1 in \cite{chitashvili1996generalized}.
		
		Assume conditions \eqref{regpde}. When $r=0$, conditions (B1)--(B4) implies the boundedness of $u$ by the same argument in the proof of Proposition \ref{B1H23}. On the other hand, if $r>0$, then $\rho_r\equiv 0$.  Therefore, without loss of generality, we can assume that $|g(t,x,y,z)|\leq C$ and
		\[
		\norm{f(\cdot,\cdot, u,\nabla u \sigma)}_{L^p}\leq \brak{\int_{[0,T]\times \bbR^m}\sup_{z\in\bbR^{d\times n}}\brak{|f(t,x,y,z)|^p}dtdx}^{1/p}\leq C.
		\]
		As we have a measurable function $u$ and $\nabla u$, let us define
		\begin{align*}
			L&:=\scL+\sum_{i=1}^m(b+\sigma g)^i\partial_{x_i}\\
			F(t,x)&:=f(t,x,u(t,x),(\nabla u \sigma)(t,x))-L h(x).
		\end{align*}
		Then, $\tilde u(t,x):=u(t,x)-h(x)$ solves the following PDE
		\begin{align*}
			L \tilde u=F;\qquad \tilde u(T,x)=0.
		\end{align*}
		Note that the above PDE is linear parabolic with measurable $F\in L^p([0,T]\times \bbR^m)$, $b+\sigma g$ is bounded, and  $(\sigma\sigma^\intercal)(t,\cdot)\in VMO(\bbR^m)$. Therefore, it satisfies the condition of Theorem 2.1 of \cite{krylov2007parabolic}, and $\tilde u\in W^{1,2}_p([0,T)\times \bbR^m)$ is a unique solution. As a result, $u=\tilde u+h$ is the unique solution of \eqref{pde} and $u\in W^{1,2}_p([0,T)\times \bbR^m)$.
	\end{proof}
	\section{Applications}
	In this section, we provide simple applications of our main result to the optimal control of the spread of an infectious disease and the carbon emission allowance market.
	\subsection{Controlling the Spread of an Infectious Disease }
	Let $W$ be a one-dimensional Brownian motion, $P$ be the number of infections, and $\alpha$ be the measures imposed by the policymaker to stop the spread. The admissible set for $\alpha$ is the set of non-negative adapted processes in $\bbH^2(\bbR)$. For measurable functions $\theta:\bbR\to\bbR$ and positive constant $\sigma$, assume that $P$ follows the dynamics
	\begin{align}\label{Pdynamics}
		\frac{dP_t}{P_t}=\brak{\theta(\log P_t)+\half|\sigma|^2 -\alpha_t}dt+\sigma dW_t;\qquad P_0=e^x.	
	\end{align}
	
	The interpretation of the dynamics is straightforward. Assuming there is no randomness in the spread ($\sigma= 0$), $\theta(\log P)$ represents the exponent of the infection growth when there is no intervention ($\alpha=0$). Function $p$ represents the compliance of citizens to the policy. In summary, if policy $\alpha$ is introduced, the growth exponent will be reduced to $\theta(\log P)-\alpha$.
	
	Let us define $X=\log P$. By It\^o formula, \eqref{Pdynamics} transforms to
	\begin{align*}
		dX_t&=\brak{\theta(X_t)- \alpha_t}dt+\sigma dW_t; &X_0&=x.
	\end{align*}
	Our objective as the policymaker is to minimize
	\begin{align*}
		J(\alpha):=\bbE\edg{\int_0^T|\alpha_t|^2+q(X_t)dt}.
	\end{align*}
	Here, $|\alpha_t|^2$ represents the running cost of the policy $\alpha$, and $q:\bbR\to [0,\infty)$ is the cost incurred by the number of infections. 
	\begin{Remark}
		It is realistic to assume that $q$ is a non-differentiable function, as it is the cost of infection. For example, consider that there is a capacity for medical services. If the infected patient number $P$ exceeds a certain level, there will be a shortage of medical services, which will cost a lot more per additional patient. 
	\end{Remark}
	
	Let the corresponding Hamiltonian and its minimizer be
	\begin{align*}
		H(t,x,y,z,\pi)&:=(\theta(x)-\pi)y+|\pi|^2+q(x)\\
		\arg\min_\pi H(t,x,y,z, \pi)&=\frac{(y\vee 0)}{2}.
	\end{align*}
	The following proposition is a version of Theorem 4.25 \cite{carmona2016lectures} for convex, possibly not continuously differentiable $\theta$ and $q$. Here, $\partial_{+}$ denotes the right derivative.
	\begin{proposition}\label{epicarmona}
		Assume that $\theta$ and $q$ are convex Lipschitz functions, $q$ is non-decreasing, and $\sigma>0$. 
		Let $(X,Y,Z)\in\bbH^2(\bbR)\times\bbH^2(\bbR)\times\bbH^{2}(\bbR)$ be the unique solution of FBSDE
		\begin{equation}\label{epiFBSDE}
			\begin{aligned}
				dX_t&=\brak{\theta(X_t)-\frac{(Y_t\vee 0)}{2}}dt+\sigma dW_t; &X_0&=x\\
				dY_t&=-\brak{\partial_{+}q(X_t)+\partial_{+}\theta(X_t)Y_t}dt+Z_tdW_t;&Y_T&=0
			\end{aligned}
		\end{equation}
		such that $Y$ is bounded.
		Then, for $\alpha^*_t:=(Y_t\vee 0)/2$, we have $J(\alpha^*)\leq J(\alpha)$ for any non-negative process $\alpha\in\bbH^2(\bbR)$.
	\end{proposition}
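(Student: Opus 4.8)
The plan is to treat \eqref{epiFBSDE} as the coupled state--adjoint system of the control problem and run a sufficient (Pontryagin-type) verification argument, using the convexity of $H$ rather than any smoothness. The running cost is $\ell(x,\pi):=|\pi|^2+q(x)$, the drift is $\theta(x)-\pi$, and $H(t,x,y,z,\pi)=(\theta(x)-\pi)y+\ell(x,\pi)$, so that $\ell(x,\pi)=H(t,x,y,z,\pi)-(\theta(x)-\pi)y$. The first step is to show the adjoint process $Y$ is nonnegative. Because $q$ is nondecreasing, $\partial_{+}q\geq 0$, and the adjoint equation is linear in $Y$ with driver $\partial_{+}q(X_t)+\partial_{+}\theta(X_t)Y_t$; solving it with the integrating factor $\exp\brak{\int_t^s\partial_{+}\theta(X_r)\,dr}$ gives
\[
Y_t=\cE{\int_t^T\exp\brak{\int_t^s\partial_{+}\theta(X_r)\,dr}\,\partial_{+}q(X_s)\,ds}{\scF_t}\geq 0.
\]
This delivers two facts: the candidate $\alpha^*_t=(Y_t\vee 0)/2=Y_t/2$ is nonnegative and bounded, hence admissible (it lies in $\bbH^2(\bbR)$ since $Y$ is bounded); and $(x,\pi)\mapsto H(t,x,Y_t,Z_t,\pi)$ is convex, because $\theta(x)Y_t$ is convex when $Y_t\geq 0$ while $q(x)$ and $|\pi|^2$ are convex.

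Second, I would compare the costs. For an arbitrary admissible $\alpha\geq 0$ the state SDE $dX^\alpha_t=(\theta(X^\alpha_t)-\alpha_t)\,dt+\sigma\,dW_t$, $X^\alpha_0=x$, has a unique strong solution in $\bbS^2(\bbR)$ by the Lipschitz property of $\theta$ and $\alpha\in\bbH^2(\bbR)$. Writing $\Delta_t:=X^\alpha_t-X_t$ and using $\ell=H-(\theta(x)-\pi)y$,
\begin{align*}
J(\alpha)-J(\alpha^*)=\bbE\int_0^T\Big\{&\edg{H(t,X^\alpha_t,Y_t,Z_t,\alpha_t)-H(t,X_t,Y_t,Z_t,\alpha^*_t)}\\
&-\edg{(\theta(X^\alpha_t)-\alpha_t)-(\theta(X_t)-\alpha^*_t)}\,Y_t\Big\}\,dt.
\end{align*}
By the convexity established above and the subgradient inequality (using that the right-derivative $\partial_{+}f$ is a genuine subgradient of a convex $f$), the first bracket is bounded below by $\brak{\partial_{+}q(X_t)+\partial_{+}\theta(X_t)Y_t}\Delta_t+\partial_\pi H(t,X_t,Y_t,Z_t,\alpha^*_t)(\alpha_t-\alpha^*_t)$; the coefficient of $\Delta_t$ is exactly the adjoint driver. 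Since $\alpha^*_t=Y_t/2$ is the unconstrained minimizer of the convex parabola $\pi\mapsto H(t,X_t,Y_t,Z_t,\pi)$ and lies in $[0,\infty)$, we have $\partial_\pi H(t,X_t,Y_t,Z_t,\alpha^*_t)=0$, so that term vanishes.

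Third, I would apply It\^o's formula to $Y_t\Delta_t$. The crucial simplification is that $X^\alpha$ and $X$ carry the same constant diffusion $\sigma\,dW$, so $\Delta$ is of finite variation with no martingale part and the covariation term is zero; with $dY_t=-\brak{\partial_{+}q(X_t)+\partial_{+}\theta(X_t)Y_t}\,dt+Z_t\,dW_t$ and $d\Delta_t=\edg{(\theta(X^\alpha_t)-\alpha_t)-(\theta(X_t)-\alpha^*_t)}\,dt$, integrating over $[0,T]$ and taking expectations (the stochastic integral $\int_0^T\Delta_tZ_t\,dW_t$ being a true martingale) makes the boundary terms vanish because $\Delta_0=0$ and $Y_T=0$. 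This gives
\[
\bbE\int_0^T\brak{\partial_{+}q(X_t)+\partial_{+}\theta(X_t)Y_t}\Delta_t\,dt=\bbE\int_0^T Y_t\edg{(\theta(X^\alpha_t)-\alpha_t)-(\theta(X_t)-\alpha^*_t)}\,dt,
\]
which cancels exactly the two surviving terms of the cost comparison, yielding $J(\alpha)-J(\alpha^*)\geq 0$.

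The main obstacle is the nonsmoothness of $\theta$ and $q$: classical first-order and gradient identities must be replaced by subgradient inequalities throughout, and one must confirm that the right-derivatives $\partial_{+}\theta,\partial_{+}q$ appearing in the adjoint driver are valid subgradients, which holds since $\partial_{+}f(x)\in\partial f(x)$ for convex $f$. A secondary point is justifying that $\int_0^T\Delta_tZ_t\,dW_t$ is a true (not merely local) martingale: this follows from the boundedness of $Y$ together with the $\bbS^2$-bounds on $X^\alpha,X$ and $Z\in\bbH^2(\bbR)$, after a standard localization if needed.
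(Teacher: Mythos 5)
Your proposal is correct and follows essentially the same route as the paper: the Hamiltonian decomposition of $J(\alpha)-J(\alpha^*)$, integration by parts (It\^o on $Y_t(X^\alpha_t-X_t)$, justified as a true martingale via the $\bbS^2$ and $\bbH^2$ bounds), the subgradient inequalities for the convex nonsmooth $\theta$ and $q$ with $\partial_+$ as a valid subgradient, and the pointwise minimality of $\alpha^*$ in $\pi$. The only (immaterial) difference is that you obtain $Y\geq 0$ from the explicit integrating-factor representation of the linear BSDE, whereas the paper's accompanying lemma gets it from the comparison principle.
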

	Before we prove the proposition, we need the following lemma.
	\begin{lemma}The following holds:
		\begin{itemize}
			\item[(i)] \eqref{epiFBSDE} has a unique solution $(X,Y,Z)\in\bbH^2(\bbR)\times\bbH^2(\bbR)\times\bbH^{2}(\bbR)$ such that $Y$ is bounded almost surely.
			\item[(ii)] There exists a constant $C$ such that $Y_t\in[0,C]$ for all $t$ almost surely.
			\item[(iii)] $\bbE\int_0^T (X_t-X^\alpha_t)Z_tdW_t=0$ for any $\alpha\in\bbH^2(\bbR)$, where
			\begin{align*}
				dX^\alpha_t&=\brak{\theta(X^\alpha_t)-\alpha_t}dt+\sigma dW_t;&X_0&=x.
			\end{align*}
		\end{itemize}
	\end{lemma}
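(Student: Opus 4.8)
The plan is to read off the coefficients of \eqref{epiFBSDE} in the form \eqref{main:fbsde} and invoke Theorem \ref{mainthm}, and then to exploit the special structure (a driver that is affine in $Y$ backward, and a diffusion that cancels in the forward difference $X-X^\alpha$) to extract the sign of $Y$ and the martingale property in (iii). Matching \eqref{epiFBSDE} with \eqref{main:fbsde}, I would set $\sigma(t,x)\equiv\sigma$ (a constant, so {\rm (F3)} holds), $b(t,x)=\theta(x)$, $g(t,x,y,z)=-(y\vee 0)/(2\sigma)$, $f(t,x,y,z)=\partial_{+}q(x)+\partial_{+}\theta(x)y$, and $h\equiv 0$. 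Since $\theta$ and $q$ are Lipschitz, $\partial_{+}\theta$ and $\partial_{+}q$ are bounded, so $|f|\leq C(1+|y|)$ and $|g|\leq C|y|$, while $\bar f=f+zg$ is continuous in $(y,z)$ and $h$ is bounded; hence {\rm (B3)} holds with $r=0$. Theorem \ref{mainthm} then yields a strong solution $(X,Y,Z)\in\bbH^2(\bbR)\times\bbH^2(\bbR)\times\bbH^2(\bbR)$ with $\esssup_{\omega\in\Omega}\sup_{t\leq T}|Y_t|<\infty$, giving the existence part of (i) and the upper bound $Y\leq C$ in (ii).

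For the sign $Y\geq 0$ I would freeze the forward trajectory and view the backward equation as a linear BSDE: with $a_t:=\partial_{+}q(X_t)$ and $\beta_t:=\partial_{+}\theta(X_t)$ (bounded and adapted), the integrating factor $e^{\int_0^t\beta_s\,ds}$ gives the representation
\begin{align*}
Y_t=\cE{\int_t^T e^{\int_t^s\beta_u\,du}\,a_s\,ds}{\scF_t},
\end{align*}
where the stochastic-integral term is a true martingale because $\beta$ is bounded and $Z\in\bbH^2(\bbR)$. Since $q$ is non-decreasing, $a_s=\partial_{+}q(X_s)\geq 0$, so $Y_t\geq 0$; together with the bound above this proves (ii). Crucially, this sign holds for \emph{every} bounded solution, which is how I would settle uniqueness in (i): as any bounded solution has $Y\geq 0$, one may replace $(Y\vee 0)$ by $Y$ in the forward drift, so every bounded solution of \eqref{epiFBSDE} solves the modified FBSDE with $g(t,x,y,z)=-y/(2\sigma)$. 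For this modified system $\bar f=\partial_{+}q(x)+\partial_{+}\theta(x)y-zy/(2\sigma)$ is polynomial, hence smooth, in $(y,z)$, and a short computation with Young's inequality verifies the three bounds in {\rm (U2)}; the uniqueness part of Theorem \ref{mainthm} then forces any two bounded solutions to coincide. The one delicate point is that the original driver fails to be differentiable at $y=0$, and this is exactly the obstruction that the reduction via the sign of $Y$ removes.

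For (iii), the key observation is that $X$ and $X^\alpha$ share the same diffusion $\sigma\,dW$, so $X_t-X^\alpha_t=\int_0^t\brak{\theta(X_s)-\theta(X^\alpha_s)-(Y_s\vee 0)/2+\alpha_s}\,ds$ is of finite variation with $X_0-X^\alpha_0=0$. Writing $L$ for the Lipschitz constant of $\theta$ and using $0\leq Y\leq C$ from (ii) together with Gr\"onwall's inequality, I would obtain $\sup_{t\leq T}|X_t-X^\alpha_t|\leq e^{LT}\brak{CT+\int_0^T|\alpha_s|\,ds}$, whence $\sup_{t\leq T}|X_t-X^\alpha_t|\in L^2$ because $\alpha\in\bbH^2(\bbR)$. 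It then remains to upgrade the local martingale $\int_0^\cdot (X_s-X^\alpha_s)Z_s\,dW_s$ to a true martingale: by Cauchy--Schwarz and $Z\in\bbH^2(\bbR)$,
\begin{align*}
\bbE\sqrt{\int_0^T (X_s-X^\alpha_s)^2 Z_s^2\,ds}\leq \brak{\bbE\sup_{t\leq T}|X_t-X^\alpha_t|^2}^{1/2}\brak{\bbE\int_0^T Z_s^2\,ds}^{1/2}<\infty,
\end{align*}
so the Burkholder--Davis--Gundy inequality renders the stochastic integral uniformly integrable and its expectation vanishes. I expect the main obstacles to be precisely this integrability upgrade in (iii) and the uniqueness reduction in (i); the remaining estimates are routine.
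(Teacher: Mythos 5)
Your proposal is correct, and it follows the paper's overall strategy --- invoke Theorem \ref{mainthm} on a suitable reformulation of \eqref{epiFBSDE}, derive the a priori bound $0\le Y_t\le C$, use that bound to pass to a system where the uniqueness hypotheses apply, and prove (iii) by a Gr\"onwall estimate plus Burkholder--Davis--Gundy --- but your tactical choices differ in three places. For existence you verify {\rm (F3)}+{\rm (B3)} directly on \eqref{epiFBSDE} (the kink $y\mapsto y\vee 0$ costs nothing there since {\rm (B3)} only asks for continuity of $\bar f$), whereas the paper first replaces $(y\vee 0)$ by a smooth cut-off $\varphi(y)$ and checks {\rm (F3)}+{\rm (B4)}+{\rm (U2)} for the truncated system \eqref{localepi}, which delivers existence and uniqueness of the truncated system in one stroke. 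For the sign $Y\ge 0$ you write the explicit linear representation $Y_t=\cE{\int_t^Te^{\int_t^s\beta_u\,du}a_s\,ds}{\scF_t}$ with $a=\partial_{+}q(X)\ge 0$, where the paper invokes the comparison principle against $Y^d\equiv 0$ and $Y^u_t=e^{C(T-t)}-1$; the two are equivalent, and yours is arguably more self-contained. For uniqueness you use $Y\ge 0$ to replace $(y\vee 0)$ by $y$, making $\bar f$ smooth so that {\rm (U2)} applies to the modified system with $g=-y/(2\sigma)$, while the paper reuses the smooth truncation $\varphi$; both devices remove the same obstruction (non-differentiability at $y=0$) and both rest on the fact that every bounded solution of \eqref{epiFBSDE} also solves the modified system. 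In (iii) you exploit the cancellation of the common diffusion to bound $\sup_t|X_t-X^\alpha_t|$ pathwise, which is slightly more economical than the paper's separate $L^2$ estimates on $\sup_t|X_t|$ and $\sup_t|X^\alpha_t|$; the final Cauchy--Schwarz/BDG step is identical. One point worth making explicit: the uniqueness furnished by Theorem \ref{mainthm} is, via Lemma \ref{girsanovlemma}, uniqueness within the class of solutions for which the Girsanov density is a martingale, and this is available here precisely because the competing solutions have bounded $Y$ --- the same implicit step appears in the paper's proof, so this is a shared (and harmless) elision rather than a gap in your argument.
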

	\begin{proof}
		First, we prove (ii) under the assumption that \eqref{epiFBSDE} has a solution $(X,Y,Z)\in\bbH^2(\bbR)\times\bbH^2(\bbR)\times\bbH^{2}(\bbR)$.	Note that there exists a constant $C$ such that $\partial_{+}q\in[0,C]$ and $\partial_{+}\theta\in[-C,C]$. By the comparison principle, $Y^d_t\leq Y_t\leq Y^u_t$ for all $t\in[0,T]$ almost surely, where
		\begin{align*}
			dY^u_t&=-\brak{C+C|Y^u_t|}dt+Z^u_tdW_t;&Y^u_T&=0\\
			dY^d_t&=C|Y^d_t|dt+Z^d_tdW_t;&Y^d_T&=0.
		\end{align*}
		As $Y^u_t=e^{C(T-t)}-1\leq e^{CT}$ and $Y^d\equiv0$ almost surely, the claim is proved.
		
		Next, we prove (i) by using the localization argument. For $C_y$, the bound of $Y$ we obtained in (ii), we define $\varphi$ be a smooth function on $\bbR$ satisfying 
		\[
		\varphi(y)= 
		\begin{cases}
			y,& \text{if } y\in[0,C_y]\\
			0,              & \text{if } y\in(-\infty,-1]\cup[C_y+1,\infty)
		\end{cases}
		\]
		and $|\varphi(y)|\leq |y|$.
		Consider the FBSDE
		\begin{equation}\label{localepi}
			\begin{aligned}
				d\tilde X_t&=\brak{\theta(\tilde X_t)-\frac{\varphi(\tilde Y_t)}{2}}dt+\sigma dW_t; &X_0&=x\\
				d\tilde Y_t&=-\brak{\partial_{+}q(\tilde X_t)+\partial_{+}\theta(\tilde X_t)\varphi(\tilde Y_t)}dt+\tilde Z_tdW_t;&Y_T&=0.
			\end{aligned}
		\end{equation}
		Let $b(t,x)\equiv \theta(x), \sigma(t,x):\equiv\sigma, g(t,x,y,z):=-\frac{\varphi(y)}{2}, f(t,x,y,z):=\partial_{+}q(x)+\partial_{+}\theta(x)\varphi(\tilde y)$, and $h\equiv 0$. As $\theta$ and $q$ are Lipschitz, $\partial_{+}\theta$ and $\partial_{+}q$ are bounded. Then, one can verify that the coefficients satisfy (F3), (B4), and (U2) with $r=0$. Therefore, there exists a unique $(\tilde X, \tilde Y,\tilde Z)\in\bbH^2(\bbR)\times\bbH^2(\bbR)\times\bbH^{2}(\bbR)$ such that $\tilde Y$ is bounded. In particular, by the same comparison arguement we used in the proof of (ii), we obtain $\tilde Y\in[0,C_y]$. Therefore, $(\tilde X, \tilde Y,\tilde Z)$ also solves \eqref{epiFBSDE}. Therefore, we proved the existence of a solution.
		On the other hand, let $(X',Y',Z')\in\bbH^2(\bbR)\times\bbH^2(\bbR)\times\bbH^{2}(\bbR)$ be another solution to \eqref{epiFBSDE}. Then, by part (ii), $Y'_t\in[0,C]$ for all $t$. As $(X',Y',Z')$ also solves \eqref{localepi}, we have $(X',Y',Z')=(\tilde X, \tilde Y,\tilde Z)=(X,Y,Z)$. This proves the uniqueness of a solution.
		\\
		Now, let us prove (iii). As $\theta$ grows linearly, there exists $C>0$ such that $|\theta(x)|\leq C(1+|x|)$. Therefore,
		\begin{align*}
			|X_t^\alpha|&\leq |x|+\int_0^tC(1+|X^\alpha_s|)+|\alpha_s| ds+|\sigma W_t|\\
			&\leq |x|+CT+\int_0^T|\alpha_s| ds+C\int_0^t|X^\alpha_s|ds+|\sigma W_t|.
		\end{align*}
		Note that if we let $X^*_t:=\sup_{0\leq u\leq t} |X^\alpha_u|$,  there exists another constant $C'$ such that
		\begin{align*}
			\bbE|X^*_t|^2\leq C'\brak{1+\int_0^t\bbE|X^*_s|^2ds}
		\end{align*}
		as $\bbE\sup_{0\leq u\leq T}|W_u|^2<\infty$ and $\bbE\int_0^T|\alpha_s|^2ds<\infty$.
		By Gr\"onwall's inequality, $\bbE |X^*_T|^2=\bbE\sup_{0\leq u\leq T}|X_u^\alpha|^2<\infty$. We obtain $\bbE\sup_{0\leq u\leq T}|X_u|^2<\infty$ by the same argument.
		
		As
		\begin{align*}
			\bbE\sqrt{\ang{\int_0^\cdot (X_t-X^\alpha_t)Z_tdW_t}_T}&=\bbE\sqrt{\int_0^T |X_t-X^\alpha_t|^2|Z_t|^2dt}\\
			&\leq \bbE\sup_{0\leq u\leq T}|X_u-X_u^\alpha|\sqrt{\int_0^T|Z_t|^2dt}\\
			&\leq \half\bbE\sup_{0\leq u\leq T}|X_u-X_u^\alpha|^2+\half\bbE\int_0^T|Z_t|^2dt<\infty,
		\end{align*}
		 we prove the claim by the Burkholder-Davis-Gundy inequality.
	\end{proof}
	\begin{proof}[Proof of Proposition \ref{epicarmona}]
		For a given control $\alpha$, let us denote the corresponding dynamics of the log of infection number as $X'$, that is,
		\begin{align*}
			dX'_t&=\brak{\theta(X'_t)-\alpha_t}dt+\sigma dW_t;&X_0&=x.
		\end{align*}
		Note that, for $\alpha^*:=(Y\vee 0)/2$,
		\begin{align*}
			&J(\alpha^*)-J(\alpha)=\bbE\int_0^T\edg{|\alpha^*_t|^2-|\alpha_t|^2+q(X_t)-q(X'_t)}dt\\
			&=\bbE\int_0^T\edg{H(t,X_t, Y_t,Z_t,\alpha^*_t)-H(t,X'_t, Y_t,Z_t,\alpha_t)}dt - \bbE\int_0^T\edg{\theta(X_t)-\theta(X'_t)-\alpha^*_t+\alpha_t}Y_tdt.
		\end{align*}
		Note that, by integration by parts, we have
		\begin{align*}
			\bbE\int_0^T\edg{\theta(X_t)-\theta(X'_t)-\alpha^*_t+\alpha_t}Y_tdt
			&=\bbE\int_0^T(X_t-X'_t)\brak{\partial_{+}q(X_t)+\partial_{+}\theta(X_t)Y_t}dt
		\end{align*}
		Therefore,
		\begin{align*}
			&J(\alpha^*)-J(\alpha)\\
			&=\bbE\int_0^T\edg{H(t,X_t, Y_t,Z_t,\alpha^*_t)-H(t,X'_t, Y_t,Z_t,\alpha_t)}-(X_t-X'_t)\brak{\partial_{+}q(X_t)+\partial_{+}\theta(X_t)Y_t}dt\\
			&\leq \bbE\int_0^T\edg{H(t,X_t, Y_t,Z_t,\alpha_t)-H(t,X'_t, Y_t,Z_t,\alpha_t)}-(X_t-X'_t)\brak{\partial_{+}q(X_t)+\partial_{+}\theta(X_t)Y_t}dt.
		\end{align*}
		Here, we used the fact that
		\[
		H(t,X_t, Y_t,Z_t,\alpha^*_t)\leq H(t,X_t, Y_t,Z_t,\alpha_t)
		\]
		for any non-negative process $\alpha\in\bbH^2(\bbR)$. As $Y_t\geq 0$, $q(x)-q(x')\leq (x-x')\partial_{+}q(x)$ and $\theta(x)-\theta(x')\leq (x-x')\partial_{+}\theta(x)$, we have
		\begin{align*}
			H(t,X_t, Y_t,Z_t,\alpha_t)-H(t,X'_t, Y_t,Z_t,\alpha_t)&=(\theta(X_t)-\theta(X_t))Y_t+q(X_t)-q(X'_t)\\
			&\leq
			(X_t-X'_t)\brak{\partial_{+}q(X_t)+\partial_{+}\theta(X_t)Y_t}.
		\end{align*}
		Therefore, $J(\alpha^*)\leq J(\alpha)$.
	\end{proof}
	\subsection{Electricity Market with Carbon Emission Allowance}
	Let us provide an example of an FBSDE with measurable coefficients in the pricing of carbon emission allowance in the electricity market. We follow the example in \cite{carmona2013singular} except that we assume there exists a cost $c^i$ depending on the carbon emission abatement and the total cumulative carbon emission, whereas \cite{carmona2013singular} assumes only the cost's dependency on the carbon emission abatement. Heuristically, if total cumulative carbon emission increases, the government will try to reduce the marginal cost of carbon emission abatement (e.g. Emission Reduction Fund) and and the society will be urge to develop cost efficient green technologies.
	
	For simplicity, let $\bbP$ be a risk-neutral measure and let the cumulative emission of the $i$th firm ($i=1,2,..., N$) up to time $t$ be $E^i_t$. Assume that $E^i$s follow the dynamics
	\[
	E^i_t=E^i_0+\int_0^t\left(b^i\brak{s, \bar E_s}-\xi^i_s\right)ds+\int_0^t\sigma^i(s,\bar E_s)dW_s.
	\]
	where $\bar E_s:=\sum_{j=1}^NE^j_s$. Here, $b^i$ denotes the so-called \emph{business-as-usual}, the rate of emission without carbon regulation. The process $\xi^i$ is the instantaneous rate of abatement chosen by the firm. The firm controls its own abatement schedule $\xi^i$ and the carbon emission allowance quantity $\theta^i$, which is traded in the allowance market. Both control processes need to be $dt\otimes d\bbP$-square-integrable adapted processes, which is denote by $\scA$. The firm's wealth is given by
	\begin{align*}
		X^i_T\brak{\xi^i,\theta^i}=x^i+\int_0^T \theta^i_sdY_s-\int_0^Tc^i(\xi^i_s, \bar E_s)ds-E^i_TY_T
	\end{align*}
	where $x^i$ is the initial wealth, $Y$ is the allowance price, $c^i:\bbR^2\to\bbR$ is the cost occurred by the abatement $\xi^i$. We assume that $c^i(e,y)$ is jointly measurable and convex in $x$. Then, one can define
	\[
	g^i(e,y)=\arg\min_x\brak{c^i(x,e)-yx}.
	\]
	We assume that the utility of each firm is given by an increasing, strictly concave function $U:\bbR\rightarrow\bbR$, which satisfies the Inada conditions: $U'(-\infty)=+\infty\ \ \text{and}\ \ U'(+\infty)=0.$
	
	The corresponding optimization problem is to
	find a pair of $(\xi^i,\theta^i)\in\mathcal{A}$ that maximizes
	$\bbE U(X^i_T\brak{\xi^i,\theta^i}).$
	
	By the same argument in Proposition 1 of \cite{carmona2013singular}, we can deduce that $\xi^i_t=g^i(\bar E_t,Y_t)$ is the optimal control. Therefore,
	$(\bar E, Y)$ should solve the following FBSDE: for $b=\sum_{i=1}^N b^i, \sigma=\sum_{i=1}^N \sigma^i,$ and $g=\sum_{i=1}^N g^i,$
	\begin{equation}\label{fbsde:carbonex}
		\begin{aligned}
			d\bar E_t&=\edg{b(t,\bar E_t)-g(\bar E_t, Y_t)}dt+\sigma(t,\bar E_t)dW_t;&\bar E_0&\in\bbR\\
			dY_t&=Z_tdW_t;&Y_T&=\lambda 1_{[\Lambda,\infty)}(\bar E_T).
		\end{aligned}
	\end{equation}
	
	Here, the terminal condition of allowance is assumed to be an indicator function based on \cite{carmona2010market}. As one can see from the following example, our main theorem \ref{mainthm} generalizes Theorem 1 of \cite{carmona2013singular} as we allow all the coefficients to be discontinuous.
	
	\begin{Example}
		Assume that there exists constants $C, K$ and $\alpha^i\in(0,1), i=1,2,..., N$ such that, for all $(t,e,y)\in[0,T]\times\bbR\times\bbR$,
		\begin{align*}
			|b(t,e)|&\leq C\text{ and } C^{-1}\leq (\sigma\sigma^\intercal)(t,e)\leq C\\
			c^i(x,e)&=\half x^2\brak{1-\alpha^i\indicator{e\geq K}}.
		\end{align*}
	Furthermore, assume that $\sigma(t,x)$ is locally Lipschitz with respect to $x$.
	Then, \eqref{fbsde:carbonex} has a unique strong solution such that $Y$ is bounded.
	\end{Example}
\begin{proof}
	Note that
	\[
	g(e,y)=\sum_{i=1}^N\frac{y}{1-\alpha^i\indicator{e\geq K}}.
	\]
	Then it is easy to check (F1) and (B1) are satisfied with $r=0$. Therefore, there exists a strong solution of \eqref{fbsde:carbonex} such that $Y$ is bounded. Moreover, (U2) holds. The uniqueness result of Theorem \ref{mainthm} implies the uniqueness of a solution.
\end{proof}
	\section{Proof of Theorem \ref{mainthm}}
	\subsection{Measure Change of FBSDE}
	In this subsection, we provide sufficient conditions that guarantee the existence of a strong solution under the Girsanov transform. We neither assume the non-degeneracy of $\sigma$ nor the boundedness of $|b(t,0)|+\sup_{|x-x'|\leq 1}|b(t,x)-b(t,x')|$. Instead, we assume the following conditions:
	\begin{itemize}
		\item[(H1)] The SDE
		\begin{align}\label{decoupledfsde}
			dF_t=b(t, F_t)dt+\sigma(t,F_t)dW_t
		\end{align}
		has a unique strong solution.
		\item[(H2)]For the strong solution $F$ obtained in {\rm (H1)}, there exist Borel measurable functions $(u,d):[0,T]\times\bbR^m\to\bbR^d\times\bbR^{d\times n}$ such that $U_t=u(t,F_t)$ and $V_t=d(t,F_t)$ is a strong solution of BSDE
		\begin{align}\label{decoupledbsde}
			dU_t&=-f(t,F_t,U_t,V_t)-V_tg(t,F_t,U_t,V_t)dt+V_tdW_t;  &U_T&=h(F_T).
		\end{align}
		\item[(H3)] For $(F,U,V)$ in {\rm (H1)} and {\rm (H2)}, the process
		\begin{align*}
			\scE\brak{\int_0^\cdot g(s,F_s, U_s,V_s)^\intercal d W_s}
		\end{align*}
		is a martingale on $[0,T]$.
		\item[(H4)] For $u,d$ in (H3), the forward SDE
		\begin{align*}
			d\tilde F_t&=\brak{b(t,\tilde F_t)+\sigma(t,\tilde F_t)g(t,\tilde F_t, u(t,\tilde F_t),d(t,\tilde F_t))}dt+\sigma(t,\tilde F_t)dW_t; &\tilde F_0&=x
		\end{align*}
		has a (pathwise) unique strong solution $\tilde F$.
	\end{itemize}
	\begin{lemma}\label{girsanovlemma}
		Assume  {\rm (H1)--(H4)}. Then, the FBSDE
		\begin{align}\label{fbsde1}
			dX_t&=\brak{b(t,X_t)+\sigma(t,X_t) g(t,X_t,Y_t,Z_t)}dt+\sigma(t,X_t) dW_t; &X_0&=x\\\label{fbsde2}
			dY_t&=-f(t,X_t,Y_t,Z_t)dt+Z_tdW_t;&Y_T&=h(X_T).
		\end{align}
		has a strong solution $(X,Y,Z)$ such that $(Y,Z)= (u(t,X_t),d(t,X_t))$. In addition, if BSDE \eqref{decoupledbsde} has a unique strong solution, then \eqref{fbsde1}--\eqref{fbsde2} has a unique strong solution $(X,Y,Z)$ such that $\scE\brak{-\int_0^\cdot g(s,X_s, Y_s,Z_s)^\intercal d W_s}$ is a martingale on $[0,T]$ .
	\end{lemma}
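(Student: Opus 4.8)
The plan is to read the FBSDE \eqref{fbsde1}--\eqref{fbsde2} as a Girsanov perturbation of the decoupled pair in (H1)--(H2): I would build the solution on the original space from the strong solution of (H4), and for uniqueness reverse the change of measure so that the system decouples into (H1) and \eqref{decoupledbsde}.

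\textbf{Existence.} By (H3) the stochastic exponential $M:=\scE\brak{\int_0^\cdot g(s,F_s,U_s,V_s)^\intercal dW_s}$ is a true $\bbP$-martingale with $\bbE M_T=1$, so I set $d\bbQ/d\bbP=M_T$. Girsanov's theorem then makes $\tilde W_t:=W_t-\int_0^t g(s,F_s,U_s,V_s)\,ds$ a $\bbQ$-Brownian motion, and substituting $dW=d\tilde W+g\,dt$ into (H1) and \eqref{decoupledbsde} shows that, under $\bbQ$, the triple $(F,U,V)$ driven by $\tilde W$ solves the coupled system: the forward equation becomes $dF_t=\brak{b(t,F_t)+\sigma(t,F_t)g(t,F_t,U_t,V_t)}dt+\sigma(t,F_t)d\tilde W_t$, which — because $U=u(\cdot,F)$ and $V=d(\cdot,F)$ — is exactly the SDE of (H4) driven by $\tilde W$; and in the backward equation the $-V g\,dt$ drift cancels the $V g\,dt$ produced by the substitution, leaving $dU_t=-f(t,F_t,U_t,V_t)dt+V_t d\tilde W_t$ with $U_T=h(F_T)$.

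Next I invoke (H4): pathwise uniqueness together with strong existence yields, by Yamada--Watanabe, a universal measurable solution map $\Phi$ with $F=\Phi(x,\tilde W)$ $\bbQ$-a.s. On the original space I then put $X:=\Phi(x,W)=\tilde F$, $Y:=u(\cdot,\tilde F)$, $Z:=d(\cdot,\tilde F)$, so the forward equation holds by the very definition of $\tilde F$. For the backward equation I would observe that $(\tilde W,F,U,V)$ under $\bbQ$ and $(W,X,Y,Z)$ under $\bbP$ arise by applying the \emph{same} measurable functionals ($\Phi$, $u$, $d$) to a Brownian motion and therefore have identical law; the a.s.\ identity $U_t=h(F_T)+\int_t^T f\,ds-\int_t^T V_s d\tilde W_s$ valid under $\bbQ$ then transfers to $Y_t=h(X_T)+\int_t^T f\,ds-\int_t^T Z_s dW_s$ under $\bbP$. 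Transporting a stochastic-integral identity across the change of measure through equality in law is the delicate point, since the \Ito{} integral is not a pathwise object; I would handle it by realising the integral as a limit in probability of Riemann sums built from $(W,X,Y,Z)$, whose law matches that of the corresponding sums in $(\tilde W,F,U,V)$, so the limits agree in law and the $\bbP$-identity follows.

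\textbf{Uniqueness.} Assume \eqref{decoupledbsde} is uniquely solvable and let $(X,Y,Z)$ be any strong solution for which $N:=\scE\brak{-\int_0^\cdot g(s,X_s,Y_s,Z_s)^\intercal dW_s}$ is a martingale. I now reverse the construction: set $d\bbR/d\bbP=N_T$, so that $\hat W_t:=W_t+\int_0^t g(s,X_s,Y_s,Z_s)\,ds$ is an $\bbR$-Brownian motion. Substituting $dW=d\hat W-g\,dt$ \emph{decouples} the system under $\bbR$: the forward equation collapses to $dX_t=b(t,X_t)dt+\sigma(t,X_t)d\hat W_t$, i.e.\ the SDE \eqref{decoupledfsde} driven by $\hat W$, while the backward equation becomes $dY_t=\edg{-f(t,X_t,Y_t,Z_t)-Z_t g(t,X_t,Y_t,Z_t)}dt+Z_t d\hat W_t$ with $Y_T=h(X_T)$, i.e.\ \eqref{decoupledbsde} with $F$ replaced by $X$ and $W$ by $\hat W$. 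By (H1) the process $X$ coincides with the strong solution of \eqref{decoupledfsde} driven by $\hat W$, and by the assumed uniqueness of \eqref{decoupledbsde} together with (H2) we obtain $Y_t=u(t,X_t)$ and $Z_t=d(t,X_t)$, $\bbR$-a.s.\ and hence $\bbP$-a.s.\ since $\bbR\sim\bbP$. Feeding these identities back into the original forward equation shows that $X$ solves precisely the SDE of (H4), so its pathwise uniqueness forces $X=\tilde F$, whence $(X,Y,Z)$ equals the solution constructed above. As in the existence part, the main obstacle is the measure-theoretic bookkeeping — identifying $X$ with an (H1)-solution under $\bbR$ and transporting the BSDE uniqueness through the equivalent change of measure — while the decoupling itself is a direct Girsanov substitution.
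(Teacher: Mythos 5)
Your proposal is correct and follows essentially the same route as the paper: Girsanov with the density from (H3) turns the decoupled pair (H1)--\eqref{decoupledbsde} into the coupled FBSDE, pathwise uniqueness in (H4) lets you realise the solution as a measurable functional of the driving Brownian motion and transfer it back to the original space, and reversing the measure change reduces uniqueness to that of \eqref{decoupledfsde} and \eqref{decoupledbsde}. You merely make explicit two points the paper leaves implicit (the Yamada--Watanabe solution map and the transport of the stochastic-integral identity via equality in law), and your uniqueness step of feeding $Y=u(\cdot,X)$, $Z=d(\cdot,X)$ back into the forward equation to invoke (H4) is, if anything, slightly cleaner than the paper's direct comparison of two weak solutions driven by different Brownian motions.
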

	\begin{proof}
		By (H3), if we define
		\[
		B_t=W_t-\int_0^tg(s,F_s,U_s,V_s)ds,
		\]
		then $B$ is a $\tilde\bbP$-Brownian motion, where $\left.\frac{d\tilde\bbP}{d\bbP}\right|_t=\scE_t\brak{\int_0^\cdot g(s,F_s, U_s,V_s)^\intercal d W_s}$. Under $\tilde\bbP$, the FBSDE \eqref{decoupledfsde}--\eqref{decoupledbsde} becomes
		\begin{align*}
			dF_t&=\brak{b(t,F_t)+\sigma(t,F_t)g(t,F_t,u(t,F_t),d(t,F_t))}dt+\sigma(t,F_t)dB_t; & F_0&=x\\
			dU_t&=-f(t,F_t,U_t,V_t)dt+V_tdB_t;  &Y_T&=h(F_T)
		\end{align*}
		by (H1) and (H2). Note that $F$ is a strong solution by the pathwise uniqueness assumption on (H4). Therefore, $F$ is adapted to the augmented filtration generated by $B$, and so do $(U_t=u(t,F_t):t\in[0,T])$ and $(V_t=d(t,F_t):t\in[0,T])$. As a result, $(F,U,V)$ solves the FBSDE \eqref{fbsde1}--\eqref{fbsde2} and is adapted to the filtration generated by the underlying Brownian motion $B$. This implies that, for the unique strong solution $X$ of the SDE in (H4), $(X,u(\cdot,X.),d(\cdot,X.))$ is a strong solution of \eqref{fbsde1}--\eqref{fbsde2}.
		
		On the other hand, let $(X,Y,Z)$ and $(\tilde X,\tilde Y,\tilde Z)$ be strong solutions of \eqref{fbsde1}--\eqref{fbsde2} such that 
		\begin{align*}
			\scE\brak{-\int_0^\cdot g(s,X_s, Y_s,Z_s)^\intercal d W_s}\text{ and } \scE\brak{-\int_0^\cdot g(s,\tilde X_s, \tilde Y_s,\tilde Z_s)^\intercal d W_s}
		\end{align*}	
		are martingales on $[0,T]$.
		Then, by the Girsanov transform, for
		\begin{align*}
			B_t=W_t+\int_0^tg(s,X_s,Y_s,Z_s)ds\qquad\text{and}\qquad
			\tilde B_t=W_t+\int_0^tg(s,\tilde X_s,\tilde Y_s,\tilde Z_s)ds,
		\end{align*}
		both $(X,Y,Z,B)$ and $(\tilde X,\tilde Y,\tilde Z,\tilde B)$ are weak solutions of \eqref{decoupledfsde}--\eqref{decoupledbsde}. As the \eqref{decoupledfsde} enjoys the pathwise uniqueness, $X=\tilde X$  almost surely for a given Brownian motion $W$. As the BSDE \eqref{decoupledbsde} has a unique solution, we obtain $(Y,Z)=(\tilde Y,\tilde Z)$ almost surely for a given Brownian motion $W$.
	\end{proof}
	\begin{Remark}
		It is also possible to construct a solution of the decoupled FBSDE using a solution of coupled FBSDE. This technique can be used to study multidimensional quadratic BSDE (see Section 2 of \cite{cheridito2015multidimensional}).
	\end{Remark}
	\begin{Remark}
		Lemma \ref{girsanovlemma} can be extended to the case where $\sigma$ also depends on $Y,Z$ as well. In this case, the transformed FBSDE 
		\begin{align*}
			dX_t&=b(t,X_t)dt+\sigma(t,X_t, Y_t,Z_t) dW_t; &X_0&=x\\
			dY_t&=-\brak{f(t,X_t,Y_t,Z_t)+Z_tg(t,X_t,Y_t,Z_t)}dt+Z_tdW_t;&Y_T&=h(X_T)
		\end{align*}
		still has coupling through $\sigma$; therefore, it does not simplify the problem. As a result, we cannot obtain the existence results such as Theorem \ref{mainthm}.
	\end{Remark}
	\subsection{Verification of (H1)}
	In this subsection, we prove that (H1) is satisfied under either (F1), (F2), or (F3). In addition, we obtain the Markovian representation for the solution $F$ of \eqref{decoupledfsde}, which will be used in the subsequent subsections.
	Let us use the following definition introduced by \cite{hamadene1997bsdes}.
	\begin{definition}\label{def:L2dom}
		Consider a class of SDEs
		\begin{align}\label{L2SDE}
			dX^{(t,x)}_s=b(s, X^{(t,x)}_s)ds+\sigma(s, X^{(t,x)}_s)dW_s; \qquad X^{(t,x)}_t=x\in\bbR^m
		\end{align}
		defined on $[t,T]$. We say that the coefficients $(b,\sigma)$ satisfy the $L^2$-domination condition if the following conditions are satisfied:
		\begin{itemize}
			\item For each $(t,x)\in[0,T]\times\bbR^m$, the SDE \eqref{L2SDE} has a unique strong solution $X^{(t,x)}$ for any $(t,x)\in[0,T]\times\bbR^m$. We denote $\mu^{(t,x)}_s$ as the law of $X^{(t,x)}_s$, that is, $\mu^{(t,x)}_s:=\bbP\circ (X^{(t,x)}_s)^{-1}$. 
			\item For any $t\in[0,T], a\in\bbR^m, \mu^{(0,a)}_t$-almost every $x\in\bbR^m$, and $\delta\in(0,T-t]$, there exists a function $\phi_t:[t,T]\times\bbR^m\to\bbR_+$ such that
			\begin{itemize}
				\item for all $k\geq 1$, $\phi_t\in L^2([t+\delta,T]\times[-k,k]^m;\mu^{(0,a)}_s(d\xi)ds)$
				\item $\mu^{(t,x)}_s(d\xi)ds=\phi_t(s,\xi)\mu^{(0,a)}_s(d\xi)ds$
			\end{itemize}
		\end{itemize}
	\end{definition}
	\begin{proposition}\label{L2dominance}
		If (H1) holds, $(b,\sigma)$ satisfies the $L^2$-domination condition and $\bbE|F_t|^2$ is bounded uniformly for $t\in[0,T]$. 
	\end{proposition}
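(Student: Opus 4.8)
The plan is to prove the two assertions separately, starting with the uniform $L^2$ bound and then turning to the more delicate $L^2$-domination condition. Throughout I use the two standing assumptions in force in this section: the uniform non-degeneracy of $\sigma$, which in particular forces $|\sigma(t,x)|^2={\rm tr}(\sigma\sigma^\intercal)(t,x)\le m\veps$ and hence bounds $\sigma$, and the $\kappa$-bound on $b$. From the latter I would first extract linear growth of $b$: telescoping along the segment joining $0$ to $x$ in $\lceil|x|\rceil$ unit steps, each increment being controlled by $\kappa$, gives $|b(t,x)-b(t,0)|\le\kappa(1+|x|)$ and hence $|b(t,x)|\le\kappa(2+|x|)$.

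With linear growth of $b$ and bounded $\sigma$, the uniform $L^2$ bound is a routine a priori estimate. Applying \Ito to $|F_s|^2$, taking expectations, and using the Burkholder--Davis--Gundy inequality on the martingale part together with Young's inequality, one obtains $\bbE\sup_{s\le t}|F_s|^2\le C(1+|x|^2)+C\int_0^t\bbE\sup_{r\le s}|F_r|^2\,ds$, and Gr\"onwall's inequality closes the estimate with a constant depending only on $\kappa,\veps,T$ and $|x|$. In particular $\sup_{t\in[0,T]}\bbE|F_t|^2<\infty$.

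For the $L^2$-domination condition the first requirement---a unique strong solution $X^{(t,x)}$ from every datum $(t,x)$---follows verbatim from the arguments verifying (H1) under (F1), (F2), or (F3), since those arguments are insensitive to the choice of initial time; I would simply re-run them on $[t,T]$. The substance is the second requirement, where I would invoke uniform parabolicity. Under the non-degeneracy of $\sigma\sigma^\intercal$ the law $\mu^{(t,x)}_s$ is, for every $s>t$, absolutely continuous with a transition density $p(t,x,s,\cdot)$ of \eqref{decoupledfsde}, obeying two-sided Aronson bounds
\[
K^{-1}(s-t)^{-m/2}\exp\!\brak{-\tfrac{K|\xi-x|^2}{s-t}}\le p(t,x,s,\xi)\le K(s-t)^{-m/2}\exp\!\brak{-\tfrac{|\xi-x|^2}{K(s-t)}}.
\]
Setting $\phi_t(s,\xi):=p(t,x,s,\xi)/p(0,a,s,\xi)$ yields the required factorization $\mu^{(t,x)}_s(d\xi)=\phi_t(s,\xi)\,\mu^{(0,a)}_s(d\xi)$, and on the region $s\in[t+\delta,T]$, $\xi\in[-k,k]^m$ both densities are bounded above and bounded below away from zero, since the time gaps $s-t\ge\delta$ and $s\ge\delta$ keep the Gaussian factors within fixed positive bounds on the compact $\xi$-set. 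Hence $\phi_t$ is bounded there, and as $\mu^{(0,a)}_s$ is a probability measure, $\int_{t+\delta}^T\!\int_{[-k,k]^m}\phi_t^2\,\mu^{(0,a)}_s(d\xi)\,ds\le\norm{\phi_t}_\infty^2\,(T-t)<\infty$, giving $\phi_t\in L^2$.

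The \emph{main obstacle} is the validity of these two-sided density bounds when the coefficients are merely measurable and $b$ is only of linear growth rather than bounded. The non-degeneracy is exactly what rescues this: for (F3) the density is an explicit Gaussian after a Girsanov shift removing $b$; for (F1) the local Lipschitzness of $\sigma$ places us in the classical Aronson--Friedman framework; and for (F2) the one-dimensional structure permits a scale-function reduction. In each case the linear growth of $b$ only enlarges the constant $K$ (and contributes a bounded-time factor $e^{C(s-t)}$) without destroying the local upper and lower bounds that the argument actually uses. I would therefore spend most of the effort pinning down these density estimates in the discontinuous-coefficient regime, the remaining steps being the elementary boundedness and integrability bookkeeping above.
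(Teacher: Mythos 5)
Your uniform $L^2$ bound and your treatment of (F1)/(F2) match the paper: in those cases $b$ is bounded by assumption, the paper invokes Aronson's two-sided Gaussian bounds for the transition density, forms $\phi_t(s,\xi)=\frac{d\mu^{(t,x)}_s}{d\xi}\brak{\frac{d\mu^{(0,a)}_s}{d\xi}}^{-1}$, and concludes exactly as you do that $\phi_t$ is bounded on $[t+\delta,T]\times[-k,k]^m$ because the time gaps stay away from zero on a compact $\xi$-set. The genuine gap is in case (F3), where $b$ is \emph{not} bounded but only satisfies the $\kappa$-condition (hence linear growth). Your proposed fix --- that after a Girsanov shift the density is ``an explicit Gaussian'' and that linear growth of $b$ ``only enlarges the constant $K$'' in a bound of the form $\exp\brak{-K|\xi-x|^2/(s-t)}$ --- is not correct as stated. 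With a linear-growth drift the law of $X^{(t,x)}_s$ concentrates around the deterministic flow of the drift, not around $x$: for $b(t,x)=x$ the mean is $e^{s-t}x$, which for large $|x|$ is not absorbed into any fixed constant $K$ in a Gaussian centred at $x$. Moreover the Girsanov route does not hand you a density estimate for free, since the exponential weight $\scE\brak{\int b\,dW}$ involves $\int|b(s,X_s)|^2ds$, which is unbounded in this regime and must itself be controlled.

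The paper closes this exact gap by citing Theorem 1.2 of \cite{menozzi2021density}: under the $\kappa$-condition one gets two-sided Gaussian bounds of the form $K^{\pm1}(s-t)^{-m/2}\exp\brak{\mp\lambda^{\mp1}|\vartheta^{(t,x)}(s)-\xi|^2/(s-t)}$, where $\vartheta^{(t,x)}$ is the flow of the ODE driven by a mollification of $b$. The recentering at $\vartheta^{(t,x)}(s)$ is the essential point; once it is in place, the boundedness of $\vartheta^{(t,x)}(s)$ on $[t,T]$ makes $\phi_t$ bounded on compacts and the rest of your bookkeeping goes through unchanged. So you have correctly identified where the difficulty lies, but the step you flag as ``the main obstacle'' is precisely the step that needs a nontrivial external input, and the heuristic you offer in its place would not survive scrutiny. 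Everything else in your write-up (the telescoping linear-growth bound for $b$, the Gr\"onwall estimate, the $L^2$ integrability of the bounded ratio against the probability measure $\mu^{(0,a)}_s(d\xi)ds$) is consistent with the paper's argument.
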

	\begin{proof}
			First, let us assume $b$ is bounded as in (F1) or (F2). Since $\sigma\sigma^\intercal$ is bounded, by Theorem 1 of \cite{aronson1967bounds}, there are constants $K$ and $\lambda$ which only depends on $m, T,\veps$ and $C$ that satisfies
			\[
			K^{-1}(s-t_0)^{-m/2}\exp\brak{-\frac{\lambda^{-1}|\xi-x_0|^2}{s-t_0}}\leq \frac{d\mu^{(t_0,x_0)}_s}{d \xi}\leq K(s-t_0)^{-m/2}\exp\brak{-\frac{\lambda|\xi-x_0|^2}{s-t_0}}
			\]
			for any $(t_0,x_0), (s,\xi)\in(0,T)\times\bbR^m$ with $s>t_0$. Note that
				$\mu^{(t,x)}_s(d\xi)
				=\phi_t(s,\xi)\mu^{(0,a)}_s(d\xi)$
			where 
			\[\phi_t(s,\xi):=\frac{d\mu^{(t,x)}_s}{d \xi}\brak{\frac{d\mu^{(0,a)}_s}{d \xi}}^{-1}\leq K^2\brak{\frac{s}{s-t}}^{m/2}\exp\brak{-\frac{\lambda|\xi-x|^2}{s-t}+\frac{\lambda^{-1}|\xi-a|^2}{s}},
			\]
			$\phi_t\in L^2([t+\delta,T]\times[-k,k]^m;\mu^{(0,a)}_s(d\xi)ds)$ for all $k\geq 1$. Therefore, the $L^2$-domination condition holds.
			
			Now, let us consider the case (F3).	Let us denote $\vartheta^{(t,x)}(s)$ to be the solution of ODE
		\[
		\frac{d\vartheta^{(t,x)}}{ds}=\tilde b(s,\vartheta^{(t,x)}(s));\qquad \vartheta^{(t,x)}(t)=x.
		\]
		Here, $\tilde b(s,x)$ is a mollification of $b(s,x)$ with respect to $x$, that is,
		\[
		\tilde b(s,x)=\int_{\bbR^m} b(s,y)p(x-y)dy
		\]
		for some $p\in C^\infty_c(\bbR^m;\bbR)$ with support in the unit ball and scuh that $\int_{\bbR^m}p(x)dx=1$.
		Then, by Theorem 1.2 of \cite{menozzi2021density}, there are constants $\lambda\in(0,1]$ and $ K\geq 1$, which may depend on $T,\alpha, \veps, \kappa$ such that
		\small
		\[
		K^{-1}(s-t_0)^{-m/2}\exp\brak{-\frac{\lambda^{-1}|\vartheta^{(t_0,x_0)}(s)-\xi|^2}{s-t_0}}\leq\frac{d\mu^{(t_0,x_0)}_s}{d \xi}\leq K(s-t_0)^{-m/2}\exp\brak{-\frac{\lambda|\vartheta^{(t_0,x_0)}(s)-\xi|^2}{s-t_0}}
		\]
		\normalsize
		for any $(t_0,x_0), (s,\xi)\in(t_0,T)\times\bbR^m$.
		Note that $\mu^{(t,x)}_s(d\xi)
		=\phi_t(s,\xi)\mu^{(0,a)}_s(d\xi)$ where 
		\[\phi_t(s,\xi):=\frac{d\mu^{(t,x)}_s}{d \xi}\brak{\frac{d\mu^{(0,a)}_s}{d \xi}}^{-1}\leq K^2\brak{\frac{s}{s-t}}^{m/2}\exp\brak{-\frac{\lambda|\vartheta^{(t,x)}(s)-\xi|^2}{s-t}+\frac{\lambda^{-1}|\vartheta^{(0,a)}(s)-\xi|^2}{s}}.
		\]
		As $\vartheta^{(t,x)}(s)$ is uniformly bounded for $s\in[t,T]$,  $\phi_t$ is bounded on $[t+\delta,T]\times[-k,k]^m$ for any $k\geq 1$. Therefore, the $L^2$-domination condition holds.
		
		On the other hand, note that there exists a constant $K$ such that $|b(t,x)|\leq K(1+|x|)$. Then, there exist non-negative constants $K_1$ and $K_2$ that satisfy
		\begin{align*}
			\bbE|F_t|^2&\leq K_1\brak{|F_0|^2+\bbE\int_0^t|b(s,F_s)|^2ds+\bbE\int_0^T|\sigma(s,F_s)|^2ds}\leq K_2\brak{1+\int_0^t\bbE|F_s|^2ds}
		\end{align*}
		By Gr\"onwall's inequality, we have $\sup_{t\in[0,T]}\bbE|F_t|^2<\infty$.
	\end{proof}
	\begin{proposition}\label{H1sufficient}
		If either {\rm (F1), (F2)}, or {\rm (F3)} holds, then \eqref{decoupledfsde} has a unique strong solution $F$, $\bbE|F_t|^2$ is bounded uniformly for $t\in[0,T]$, and $(b,\sigma)$ satisfies the $L^2$-domination condition.
	\end{proposition}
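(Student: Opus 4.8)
The plan is to reduce Proposition~\ref{H1sufficient} to the verification of (H1) alone --- existence and pathwise uniqueness of a strong solution of \eqref{decoupledfsde} --- separately in each of the regimes (F1), (F2), (F3). Indeed, once (H1) is known, Proposition~\ref{L2dominance} applies and delivers both the uniform-in-$t$ bound on $\bbE|F_t|^2$ and the $L^2$-domination condition, so all three assertions follow simultaneously. Throughout I use the standing hypotheses: $\sigma$ is uniformly nondegenerate (so $|\sigma|$ is bounded), and the unit-ball oscillation bound on $b$ forces the linear growth $|b(t,x)|\leq\kappa(2+|x|)$.

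Under (F1) the drift $b$ is bounded and measurable while $\sigma$ is locally Lipschitz, bounded, and nondegenerate. Weak existence is immediate: removing the bounded drift by Girsanov reduces matters to $d\tilde F=\sigma(t,\tilde F)\,d\tilde W$, which admits a weak solution by nondegeneracy (equivalently, the associated martingale problem is well posed). For pathwise uniqueness I would run a Veretennikov--Zvonkin argument --- solving an associated parabolic equation built from the operator $\scL$ whose gradient cancels the irregular drift, so that the transformed process satisfies an SDE with spatially Lipschitz coefficients, for which pathwise uniqueness is classical. Since $\sigma$ is only locally Lipschitz, I localize at the exit times from balls of radius $k$ and rule out explosion via the linear growth and a standard moment estimate, patching the local solutions into a global one. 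Pathwise uniqueness together with weak existence then yields a unique strong solution by the Yamada--Watanabe theorem.

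Under (F2) we are in dimension one, where sharper uniqueness results are available and the bounded drift is harmless. Weak existence is obtained as above. In case (i), $|\sigma(t,x)-\sigma(t,y)|^2\leq\theta(|x-y|)$ with $\int du/\theta(u)=\infty$ is exactly the Yamada--Watanabe modulus condition on the diffusion coefficient, which (for bounded measurable drift, e.g.\ after the explicit one-dimensional scale transform that removes the drift) gives pathwise uniqueness. In case (ii), $\theta$ is increasing and bounded with $|\sigma(t,x)-\sigma(t,y)|^2\leq|\theta(x)-\theta(y)|$, which is precisely Le~Gall's condition; pathwise uniqueness then follows from the local-time technique for nondegenerate one-dimensional equations with bounded measurable drift. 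In both cases Yamada--Watanabe upgrades weak existence plus pathwise uniqueness to a unique strong solution.

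Under (F3) the matrix $\sigma$ is constant and nondegenerate, while $b$ is merely measurable with linear growth. Here I follow the same well-posedness circle already exploited in Proposition~\ref{L2dominance}: mollify $b$ into smooth $\tilde b$ with a unique strong solution, and pass to the limit through a Zvonkin transform adapted to additive noise; the unit-ball oscillation bound on $b$ is exactly what controls the transform and yields pathwise uniqueness, while linear growth together with Gr\"onwall's inequality rules out explosion and bounds the moments. The main obstacle, common to all three cases, is pathwise uniqueness under merely measurable (and, in (F3), unbounded) drift: the delicate points are the solvability and $C^{1,2}$-regularity of the drift-removing parabolic equation when $\sigma$ is only locally Lipschitz (F1) or the drift grows (F3), and, in (F2), correctly matching the one-dimensional diffusion conditions (i) and (ii) with the local-time machinery. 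Once uniqueness is secured, the remaining conclusions are immediate from Proposition~\ref{L2dominance}.
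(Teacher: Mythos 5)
Your skeleton is the same as the paper's: verify (H1) case by case under (F1), (F2), (F3), then let Proposition \ref{L2dominance} supply both the uniform bound on $\bbE|F_t|^2$ and the $L^2$-domination condition. The difference is in how (H1) is discharged. The paper spends no effort on (F1) and (F2): it cites \cite{gyongy2001stochastic} for bounded measurable drift with locally Lipschitz nondegenerate diffusion and \cite{le1984one} for the one-dimensional moduli in (F2)(i)--(ii) --- precisely the theorems whose proofs you are sketching via Zvonkin/Veretennikov, Yamada--Watanabe, and Le Gall's local-time argument. Your identification of the relevant conditions is correct, but your sketches stop exactly at the points you yourself flag as delicate (solvability and $C^{1,2}$-regularity of the drift-removing parabolic equation, explosion control), so as written they amount to a heavier, not-yet-complete rederivation of results the paper simply invokes. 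For (F3) the paper's route differs more substantively: since $\sigma$ is a constant $m\times n$ matrix, it first diagonalizes $\sigma\sigma^\intercal$ and sets $P=E^\intercal F$ so that $E^\intercal\sigma W$ becomes a standard $m$-dimensional Brownian motion by the L\'evy characterization, and only then cites \cite{menoukeu2019flows} for additive noise with measurable drift of linear growth. Your ``mollify and apply a Zvonkin transform for additive noise'' plan skips this normalization --- without it $\sigma W$ is not a Brownian motion and $\sigma$ need not be square or invertible --- and does not actually say how the transform handles an unbounded, merely measurable drift; this is the one spot where a concrete idea is missing rather than merely delegated to the literature. In short: same decomposition, but the paper buys (H1) with three targeted citations plus a linear change of coordinates, whereas your version would require reproving nontrivial strong-wellposedness theorems and still needs the eigenvector normalization in case (F3).
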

	\begin{proof}
		It is easy to verify that \eqref{decoupledfsde} has a unique strong solution if either {\rm (F1) or (F2)} holds (see \cite{gyongy2001stochastic} and \cite{le1984one}). 
		
		On the other hand, assume that (F3) holds. As a symmetric matrix $A:=\sigma\sigma^\intercal$ has a nonzero determinant, $A$ has orthonormal eigenvectors $\crl{\xi_1,\cdots,\xi_m}$ with the corresponding strictly positive eigenvalues $\crl{\lambda_1,\cdots,\lambda_m}$. Let
		\[
		E:=\brak{\frac{1}{\sqrt{\lambda_j}}\xi_j:j=1,2,\dotsc, m }.
		\]
		Then, $E^\intercal AE$ becomes an identity matrix. Therefore, by L\'evy characterization, we know that $B:=U^\intercal \sigma W$ is a $\bbP$-Brownian motion. Note that \cite{menoukeu2019flows} shows that the forward SDE
		\begin{align*}
			dP_t=E^\intercal b(t,\brak{E^\intercal}^{-1} P_t)dt+ dB_t;\qquad P_0=E^\intercal x
		\end{align*}
		has a unique strong solution because $b(t,x)$ has a linear growth in $x$, as pointed out in Remark \ref{lineargrowth}. Therefore, $\tilde X:= \brak{E^\intercal}^{-1} P$ is a unique solution of \eqref{decoupledfsde}. 
		
		Therefore, (H1) is satisfied under one of the assumptions (F1), (F2), or (F3). The remainder of our claim is proved by Proposition \ref{L2dominance}.
	\end{proof}
	\subsection{Verification of (H2) and (H3)}
	In this subsection, we will always assume (H1).
	\begin{proposition}\label{B1H23}
		{\rm (B1)} implies {\rm (H2)} and {\rm (H3)}.
	\end{proposition}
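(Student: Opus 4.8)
The plan is to realise (H2) and (H3) as an application of the theory of Markovian BSDEs with a continuous, linear-growth driver to the $L^2$-dominated diffusion $F$ furnished by (H1). Under the decoupling the relevant driver is $\bar f(t,x,y,z)=f(t,x,y,z)+zg(t,x,y,z)$, which (B1) takes to be continuous in $(y,z)$, with terminal value $h(F_T)$. I would begin by recording its growth. Because $\rho_r\equiv 0$ once $r>0$, in that regime $g$ is bounded, so $\abs{\bar f(t,x,y,z)}\leq C(1+\abs{x}^r+\abs{y})+C'\abs{z}$ is of linear growth in $(y,z)$; together with $\abs{h(x)}\leq C(1+\abs{x}^r)$ and the finite moments of $F$ (which the Gaussian-type density bounds in the verification of (H1) provide), both $h(F_T)$ and $\bar f(\cdot,F_\cdot,0,0)$ lie in the relevant $L^2$ spaces.

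For the existence together with the Markovian representation demanded by (H2), I would invoke the existence result of \cite{hamadene1997bsdes} for drivers continuous in $(y,z)$: its hypotheses are precisely that $F$ obeys the $L^2$-domination condition of Definition \ref{def:L2dom}, which Proposition \ref{H1sufficient} supplies, and that the driver is continuous with linear growth. This yields a solution $(U,V)\in\bbH^2(\bbR^d)\times\bbH^2(\bbR^{d\times n})$ and Borel functions $u,d$ with $U_t=u(t,F_t)$ and $V_t=d(t,F_t)$ -- exactly (H2). It is the $L^2$-domination (combined with the probabilistic argument of \cite{cinlar1980semimartingales}) that promotes a generic solution to a deterministic function of the current state $(t,F_t)$.

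When $r=0$ an extra layer is needed, since $g$ may then grow in $y$ through $\rho_0$. Here I would truncate, replacing $g$ by $g_N(t,x,y,z):=g(t,x,(y\wedge N)\vee(-N),z)$, so that $\abs{g_N}\leq C(1+\rho_0(N))$ is bounded and the corresponding driver is again of linear growth; solving as above produces $(U^N,V^N)$. The crucial point is a bound $\esssup_{\omega}\sup_{t}\abs{U^N_t}\leq K$ with $K$ independent of $N$. I would obtain it inside the Lipschitz approximation used to prove existence: for each Lipschitz driver the comparison theorem applies, and the term of linear growth in $z$ can be absorbed into the drift by a Girsanov change of measure with bounded kernel, so that comparison against explicitly solvable linear equations of the form $d\bar Y_t=-(C+C\abs{\bar Y_t})dt+\bar Z_t dW_t$ (as in the boundedness argument for Proposition \ref{epicarmona}) yields a finite $K$ that is independent of the size of the $z$-coefficient, hence uniform in $N$. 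Passing to the limit preserves the bound, and taking $N>K$ makes the truncation inactive, so that $(U^N,V^N)$ solves the untruncated BSDE and satisfies $\esssup_{\omega}\sup_{t}\abs{U_t}\leq K$; this establishes (H2) for $r=0$.

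It remains to deduce (H3), which follows as soon as $g$ is bounded along the solution. For $r>0$ this is immediate, and for $r=0$ the $L^\infty$ bound gives $\abs{g(s,F_s,U_s,V_s)}\leq C(1+\rho_0(K))$; in either case $\int_0^T\abs{g(s,F_s,U_s,V_s)}^2 ds$ is bounded, Novikov's criterion holds, and $\scE\brak{\int_0^\cdot g(s,F_s,U_s,V_s)^\intercal dW_s}$ is a true martingale on $[0,T]$. I expect the genuine difficulty to be the uniform-in-$N$ $L^\infty$ estimate in the case $r=0$: the driver is only of linear, not Lipschitz, growth in $z$, so the comparison has to be carried out at the level of the Lipschitz approximants and combined with a bounded measure change, and one must check that removing the truncation does not inflate the bound.
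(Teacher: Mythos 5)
Your overall architecture matches the paper's: for $r>0$ the boundedness of $g$ gives (H3) for free and Hamad\`ene's existence theorem for continuous, linear-growth drivers under the $L^2$-domination condition gives (H2) with the Markovian representation; for $r=0$ you truncate $g$ in $y$, seek an a priori $L^\infty$ bound on $U$ uniform in the truncation level $N$, and then choose $N$ large enough that the truncation is inactive, after which (H3) follows from boundedness of $g$ along the solution. This is exactly the paper's plan.

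The gap is in how you obtain the uniform bound when $r=0$. You propose to run the comparison theorem (at the level of the Lipschitz approximants, after a Girsanov change absorbing the $z$-dependence) against scalar equations of the form $d\bar Y_t=-(C+C|\bar Y_t|)dt+\bar Z_t\,dW_t$. But (B1) does not restrict $d=1$ --- only (B3) and (U2) do --- and the comparison principle fails in general for $d$-dimensional BSDE systems (the paper itself makes this point in the introduction when explaining why viscosity methods are unavailable for $d>1$). So your key estimate is only justified in the scalar case. The paper avoids comparison entirely: it truncates $y$ in both $f$ and $g$ via the radial projection $y\mapsto Ny/(|y|\vee N)$, changes measure with the bounded kernel $g_N$, and applies It\^o's formula to $e^{at}|U_t|^2$ with $a=2C(C+1)$, using $2U_t^\intercal f_N-|V_t|^2-a|U_t|^2\leq(2C^2+2C-a)|U_t|^2+1\leq 1$ to conclude $|U_t|^2\leq e^{a(T-t)}\brak{C^2+T-t}$, a bound independent of both $N$ and the dimension $d$. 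Replacing your comparison step by this quadratic It\^o estimate (or any other dimension-free a priori bound) is necessary to cover the full scope of (B1); the rest of your argument, including the de-truncation and the Novikov argument for (H3), then goes through.
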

	\begin{proof} First, let us assume $r>0$, which implies $\rho_r\equiv 0$. Then, (H3) holds automatically because $g$ is bounded. Note that $\tilde X$ obtained by \eqref{decoupledfsde} is a Markov process because the corresponding Martingale problem is well posed. By Propositions \ref{L2dominance} and \ref{decoupledLip}, all of the conditions in Remark 27.3 of \cite{hamadene1997bsdes} are satisfied. As $\bar f$ exhibits linear growth in $(y,z)$, Theorem 27.2 of \cite{hamadene1997bsdes} proves (H2). 
		
		On the other hand, if $r=0$, then $h$ and $f(t,x,0,0)$ are bounded by $C$. Then, we will show that $U_t=u(t,F_t)$ is uniformly bounded by $e^{\half aT}\sqrt{C^2+T}$. If so, (H3) is automatically satisfied. Moreover, because $\bar f$ exhibits linear growth in $(y,z)$, (H2) holds by the previous argument again.
		
		For a positive constant $N$, let
		\[
		P_N:[0,T]\times\bbR^m\times\bbR^d\times\bbR^{d\times n}\ni(t,x,y,z)\mapsto \brak{t,x,\frac{Ny}{|y|\vee N},z}\in[0,T]\times\bbR^m\times\bbR^d\times\bbR^{d\times n}
		\]
		and $f_N:=f\circ P_N, g_N:=g\circ P_N$, and $H_N(t,x,y,z):=f_N(t,x,y,z)+zg_N(t,x,y,z)$. Then, by (B1), there exists a constant $\tilde C>0$ such that
		\[
		|H_N(t,x,y,z)|\leq \tilde C(1+|z|).
		\]
		Then, by the same argument for $r>0$, the FBSDE
		\begin{equation}\label{app:lem2}
			\begin{aligned}
				dF_t&=b(t,F_t)dt+\sigma dW_t; & F_0&=x\\
				dU_t&=-H_N(t,F_t,U_t,V_t)dt+V_tdW_t;  &U_T&=h(F_T)
			\end{aligned}
		\end{equation}
		has a strong solution $(F, U,V)$ such that there exist Borel measurable functions $(u,d):[0,T]\times\bbR^m\to\bbR^d\times\bbR^{d\times n}$ such that $U_t=u(t,F_t)$ and $V_t=d(t,F_t)$ $dt\otimes d\bbP$-almost everywhere.
		By It\^o formula, we have
		\begin{align*}
			e^{at}|U_t|^2
			&=e^{aT}|h(F_T)|^2+\int_t^Te^{as}\brak{2U_s^\intercal f_N(s,F_s,U_s,V_s)-|V_s|^2- a|U_s|^2} ds-\int_t^T2e^{as}U_s^\intercal V_sd\tilde W_s\\
		\end{align*}
		where $a=2C(C+1)$ and \[
		\tilde W_t=W_t - \int_0^tg_N(s,F_s,U_s,V_s) ds.
		\]
		Note that $\tilde W$ is a Brownian motion under some measure $\tilde \bbP$ because $g_N$ is bounded. In addition, by using the inequality $2Cxy\leq C^2x^2+y^2$, we have
		\begin{align*}
			&2U_t^\intercal f_N(t,F_t,U_t,V_t)-|V_t|^2- a|U_t|^2\\
			&\leq 2C|U_t|(1+|U_t|+|V_t|)-|V_t|^2- a|U_t|^2\leq  (2C^2+2C-a)|U_t|^2+1\leq 1.
		\end{align*}
		Therefore, if we denote $\tilde\bbE$ as the expectation with respect to $\tilde\bbP$, we obtain
		\begin{align*}
			|U_t|^2&=e^{-at}\tilde \bbE\edg{e^{aT}|h(F_T)|^2+\int_t^Te^{as} \brak{2U_s^\intercal f_N(s,F_s,U_s,V_s)-|V_s|^2- a|U_s|^2} ds}\\
			&\leq e^{a(T-t)}\brak{C^2+T-t}.
		\end{align*}
		Therefore, $U$ is uniformly bounded, independent of the choice of $N$. If we set $N\geq e^{\half aT}\sqrt{C^2+T}$, the solution of \eqref{app:lem2} is the solution of \eqref{decoupledbsde} and $|U_t|\leq N$. This proves the claim.
	\end{proof}
	\begin{proposition}\label{B2H23}
		{\rm (B2)} implies {\rm (H2)} and {\rm (H3)}.
	\end{proposition}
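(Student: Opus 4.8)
The plan is to read the decoupled BSDE \eqref{decoupledbsde} as a multidimensional quadratic BSDE over the Markov process $F$ supplied by (H1), exploiting the diagonal structure of (B2): in the driver $\bar f^i=\tilde f^i(t,x,z^i)+\hat f^i(t,x,y,z)$ the genuinely quadratic part $\tilde f^i$ depends only on the $i$-th row $z^i$, while the coupling part $\hat f^i$ is globally Lipschitz and subquadratic. First I would establish an a priori $\bbS^\infty$-bound on $U$. Since $\abs{h}\leq C$ and $\abs{\bar f^i}\leq\abs{\tilde f^i}+\abs{\hat f^i}\leq C\abs{z^i}^2+C(1+\abs{y})$, each component $U^i$ satisfies a scalar quadratic BSDE whose driver (the other coordinates plugged in from the realized processes) has quadratic $z^i$-growth and affine forcing $C(1+\abs{U_s})$; because the quadratic term is diagonal this componentwise reduction is legitimate. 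The standard $L^\infty$ estimate for scalar quadratic BSDEs with bounded terminal then yields $\norm{U^i}_{\bbS^\infty}\leq\norm{h}_\infty+CT(1+\norm{U}_{\bbS^\infty})$. For $CT$ small this closes to an explicit $M=M(C,T,\norm{h}_\infty)$, and for arbitrary $T$ I would iterate over a partition of $[0,T]$ into subintervals of length shorter than $1/C$. Applying It\^o's formula to $\exp(\gamma U^i)$ with $\gamma>2C$, bounding the drift by $\abs{\tilde f^i}\leq C\abs{z^i}^2$, and conditioning between a stopping time and $T$ then gives a uniform bound $\norm{V}_{\mathrm{BMO}}\leq M'$.

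Next, for existence together with the Markovian representation I would approximate, replacing $\tilde f^i$ by the truncation $\tilde f^i_k(t,x,z^i):=\tilde f^i(t,x,\pi_k(z^i))$, where $\pi_k$ is the projection onto the ball of radius $k$. Since $\pi_k$ is $1$-Lipschitz with $\abs{\pi_k(z^i)}\leq\abs{z^i}$, this preserves both $\abs{\tilde f^i_k}\leq C\abs{z^i}^2$ and the local-Lipschitz bound, hence the uniform a priori estimates, while making the truncated driver $\bar f_k=\tilde f_k+\hat f$ of linear growth in $(y,z)$ and continuous. Thus the linear-growth machinery invoked in Proposition \ref{B1H23}, namely Theorem 27.2 of \cite{hamadene1997bsdes}, provides for each $k$ a strong solution $(U^k,V^k)$ with a Markovian representation $U^k_t=u_k(t,F_t)$, $V^k_t=d_k(t,F_t)$. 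Using the uniform $\bbS^\infty$ and $\mathrm{BMO}$ bounds I would pass to the limit by the stability argument for diagonally quadratic systems in the spirit of \cite{cheridito2015multidimensional}, obtaining $U^k\to U$ in $\bbS^2$ and $V^k\to V$ in $\bbH^2$ with $(U,V)$ solving \eqref{decoupledbsde}. The representation survives the limit: for each $t$, $U_t$ is an a.s.\ limit of $\sigma(F_t)$-measurable random variables, and along a subsequence $V$ is a $dt\otimes d\bbP$-a.e.\ limit of functions of $F$; identifying the $L^2(\mu^{(0,x)}_t)$-limits of $u_k(t,\cdot)$ and $d_k(t,\cdot)$, and invoking the Markov/flow argument of \cite{cinlar1980semimartingales}, produces Borel functions $u,d$ with $U_t=u(t,F_t)$ and $V_t=d(t,F_t)$. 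This establishes (H2).

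Finally, (H3) is immediate: since $\rho_r$ is nondecreasing and $U$ is bounded by $M$, the integrand satisfies $\abs{g(s,F_s,U_s,V_s)}\leq C(1+\rho_r(M))$, so Novikov's condition holds trivially and $\scE\brak{\int_0^\cdot g(s,F_s,U_s,V_s)^\intercal dW_s}$ is a true martingale on $[0,T]$. The hard part will be the passage to the limit in the second step: multidimensional quadratic BSDEs need not be solvable in general (as the Frei--dos~Reis example shows), and it is exactly the diagonal dependence of $\tilde f^i$ on $z^i$, together with the subquadratic Lipschitz coupling $\hat f$, that both powers the componentwise $L^\infty$ estimate and forces the truncated solutions to converge. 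Keeping the $\mathrm{BMO}$ norms uniform in $k$ and transferring the Markovian structure through the limit is the technical core of the argument.
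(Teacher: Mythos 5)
Your overall architecture (a priori $\bbS^\infty$ bound on $U$, BMO bound on $V$, approximation by drivers of linear growth to import the Markovian representation, then a $\limsup$ identification of $u$ and $d$, with (H3) following from boundedness of $g$ along the solution) matches the paper's in spirit, and your (H3) argument and the transfer of the representation through the limit are fine. The genuine gap is the step you yourself flag as ``the technical core'': the convergence of the truncated solutions $(U^k,V^k)$, where $\tilde f^i$ is replaced by $\tilde f^i(t,x,\pi_k(z^i))$, to a solution of the untruncated system. Uniform $\bbS^\infty$ and BMO bounds do not make the truncation eventually inactive (a BMO bound gives no pointwise control of $V^k$), and the Kobylanski-type monotone stability theorem you would lean on in the scalar case is unavailable for $d>1$; a genuine Cauchy estimate for the truncated sequence, exploiting the diagonal structure and the BMO norms, would essentially amount to re-proving the existence theorem of \cite{hu2016multi}. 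As written, the existence half of (H2) is therefore not established.

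The paper avoids this entirely. It first cites \cite{hu2016multi} for existence, uniqueness, boundedness of $U$ and the BMO property of $V\cdot W$ (which also disposes of (H3) and lets one assume $\hat f$ bounded without loss of generality). It then only has to show that the solution admits a Markovian representation, and does so by reproducing the Hu--Tang iteration itself: the Lipschitz coupling $\hat f^i$ is evaluated at the previous iterate while the diagonal quadratic part $\tilde f^i(s,x,z^i)$ is kept implicit, so each step is a \emph{scalar} quadratic BSDE whose Markovian representation follows from the exponential-transform argument of Proposition \ref{B3H23}; convergence of these iterates in $\bbS^\infty$ and BMO is already part of the cited theorem, so the $\limsup$ construction of $u$ and $d$ goes through. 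To repair your proof, either supply the stability estimate for your truncated sequence, or replace the truncation by this iteration, for which convergence is known.
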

	\begin{proof}
		Note that by \cite{hu2016multi}, BSDE \eqref{decoupledbsde} has a unique solution $(U, V)$ such that $U$ is bounded. In this case, $V\cdot W$ is a BMO martingale. Therefore, without loss of generality, we can assume that $\hat f$ is a bounded Lipschitz function and $|g(t,x,y,z)|\leq C$. Then, (H3) is satisfied.
		
		Now, we only need to prove the existence of measurable functions $u$ and $d$ such that $(U_t, V_t)=(u(t,F_t),d(t,F_t))$. Let $U^{(0)}_t=u_0(t, F_t)= 0$ for all $t\in[0,T]$ and $V^{(0)}_t=d_0(t,F_t)= 0$, and we define
		\begin{align*}
			U^{(k+1),i}_t=h^i(F_T)+\int_t^T\hat f^i(s,F_s,U^{(k)}_s, V^{(k)}_s)+\tilde f^i(s,F_s,V^{(k+1),i}_s)ds-\int_t^TV^{(k+1),i}dW_t.
		\end{align*}
		Then, as shown in the proof of Proposition \ref{B3H23}, there exist measurable functions $u_k:[0,T]\times\bbR^m\to\bbR^d$ and $d_k:[0,T]\times\bbR^m\to\bbR^{d\times n}$ such that $U^{(k)}_t=u_k(t,F_t)$ and $V^{(k)}_t=d_k(t, F_t)$.
		As $U^{(k)}\to U$ in $\bbS^\infty$ and $V^{(k)}\cdot W\to V\cdot W$ in BMO by \cite{hu2016multi}, if we let \(u^i(t,x):=\limsup_{k\to\infty}u_k^i(t,x)\) and \(d^{ij}(t,x):=\limsup_{k\to\infty}d_k^{ij}(t,x)\),
		where $u=(u^i)_{1\leq i\leq d}$ and $d=(d^{ij})_{1\leq i\leq d,1\leq j\leq n}$, we have
		\begin{align*}
			u^i(t,F_t)&=(\limsup_{k\to\infty}u_k^i)(t,F_t)=\limsup_{k\to\infty}(u_k^i(t,F_t))=\lim_{k\to\infty}U^{(k),i}_t=U^i_t\\
			d^{ij}(t,F_t)&=(\limsup_{k\to\infty}d^{ij}_k)(t,F_t)=\limsup_{k\to\infty}(d^{ij}_k(t,F_t))=\lim_{k\to\infty}V^{(k),ij}_t=V^{ij}_t
		\end{align*}
		in $dt\otimes d\bbP$-everywhere sense.
		Therefore, (H2) holds.
	\end{proof}
	\begin{proposition}\label{B3H23}
		{\rm (B3)} implies {\rm (H2)} and {\rm (H3)}.
	\end{proposition}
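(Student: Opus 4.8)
The plan is to read \eqref{decoupledbsde} as a scalar ($d=1$) quadratic BSDE with bounded terminal data, driven by the Markov process $F$ produced by (H1), and to obtain its Markovian representation by truncating the quadratic growth down to the linear--growth regime already settled in Proposition \ref{B1H23}. The structure mirrors, and in fact supplies, the ``representation of the iterates'' borrowed in the proof of Proposition \ref{B2H23}: each sub-BSDE appearing there is itself a scalar quadratic BSDE of exactly the type treated here.

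First I would record an a priori bound and dispose of (H3). Since $\abs{h}\leq C$ and $\abs{f}\leq C(1+\abs{y}+\abs{z}^2)$, the standard $\bbS^\infty$ estimate for scalar quadratic BSDEs (via the exponential change $y\mapsto e^{\beta y}$) shows that any bounded solution $U$ of \eqref{decoupledbsde} satisfies $\norm{U}_{\bbS^\infty}\leq M$ for a constant $M$ depending only on $C,T$. Because the bound $\abs{g}\leq C(1+\rho_r(\abs{y}))$ does not involve $z$, along such a solution $\abs{g(s,F_s,U_s,V_s)}\leq C(1+\rho_r(M))$ is bounded, so $\scE\brak{\int_0^\cdot g(s,F_s,U_s,V_s)^\intercal dW_s}$ has a bounded integrand and is a genuine martingale; hence (H3) follows automatically once a bounded solution is produced, and only (H2) remains. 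Using $M$ I may replace $y$ by its truncation at level $M$ inside $f$ and $g$, so that without loss of generality $g$ is bounded and $\bar f=f+zg$ is continuous in $(y,z)$ with $\abs{\bar f}\leq C'(1+\abs{z}^2)$.

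Next I would introduce approximating drivers $\bar f_k$ that are continuous and of linear (indeed bounded) growth in $z$ and converge to $\bar f$, for instance $\bar f_k(t,x,y,z):=\bar f\brak{t,x,y,\tfrac{kz}{\abs{z}\vee k}}$ together with the monotone regularizations from the theory of scalar quadratic BSDEs. For fixed $k$ the BSDE with driver $\bar f_k$ fits condition (B1) (taking $g\equiv0$ and $f=\bar f_k$), so by Proposition \ref{B1H23}, combined with the $L^2$-domination of $(b,\sigma)$ from Proposition \ref{L2dominance}, it has a solution $(U^{(k)},V^{(k)})$ with a Markovian representation $U^{(k)}_t=u_k(t,F_t)$ and $V^{(k)}_t=d_k(t,F_t)$ for Borel functions $u_k,d_k$. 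I would then pass to the limit: the $\bbS^\infty$ bound above holds uniformly in $k$, and the uniform quadratic bound $\abs{\bar f_k}\leq C'(1+\abs{z}^2)$ yields a uniform BMO bound on $V^{(k)}\cdot W$; feeding these into the stability theory for scalar quadratic BSDEs produces a bounded solution $(U,V)$ of \eqref{decoupledbsde} with $U^{(k)}\to U$ in $\bbS^\infty$ and, along a subsequence, $V^{(k)}\to V$ in $\bbH^2$ and $dt\otimes d\bbP$-a.e. Finally, exactly as in the $\limsup$ argument of Proposition \ref{B2H23}, I set $u(t,x):=\limsup_{k\to\infty}u_k(t,x)$ and $d(t,x):=\limsup_{k\to\infty}d_k(t,x)$; the a.e.\ convergence $u_k(t,F_t)=U^{(k)}_t\to U_t$ and $d_k(t,F_t)=V^{(k)}_t\to V_t$ forces $U_t=u(t,F_t)$ and $V_t=d(t,F_t)$ for $dt\otimes d\bbP$-almost every $(t,\omega)$, which is (H2).

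The main obstacle is the limit step in the quadratic regime: one must arrange the truncations so that the quadratic--BSDE stability theorem applies and, crucially, delivers $dt\otimes d\bbP$-a.e.\ convergence of $V^{(k)}$, since only a.e.\ convergence lets the $\limsup$ identify the measurable representations $u$ and $d$. Because $\bar f_k$ need not be monotone in $k$, this likely requires the robust stability theory based on uniform BMO control rather than a naive monotone passage. The conceptual core---that each $V^{(k)}$ is a function of $F$---rests on the purely probabilistic martingale--representation result of \cite{cinlar1980semimartingales} embedded in Proposition \ref{B1H23} (through \cite{hamadene1997bsdes}); everything else is the routine $\bbS^\infty$/BMO bookkeeping for scalar quadratic BSDEs.
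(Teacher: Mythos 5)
Your overall strategy is the same as the paper's: establish an a priori $\bbS^\infty$ bound for $U$ so that $g(s,F_s,U_s,V_s)$ is bounded and (H3) follows, then approximate the quadratic BSDE by a sequence of drivers with linear growth, obtain a Markovian representation $(u_k,d_k)$ for each approximant from the linear-growth theory (the paper quotes Proposition \ref{decoupledLip} directly; routing it through Proposition \ref{B1H23} as you do is equivalent), and finally identify $u=\limsup_k u_k$, $d=\limsup_k d_k$ along the trajectories of $F$. The one step where you genuinely deviate, and where your argument has a gap, is the convergence of the approximants. The paper follows Kobylanski: it applies the exponential change of variable $y\mapsto e^{2Ly}$ and uses the \emph{monotone} sequence of uniformly Lipschitz drivers $\theta_k$ for which uniform convergence of $Y^{(k)}$ and $\bbH^2$-convergence of $Z^{(k)}/(2LY^{(k)})$ are quoted theorems. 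Your truncation $\bar f_k(t,x,y,z)=\bar f(t,x,y,kz/(\abs{z}\vee k))$ is not monotone in $k$, and the ``robust stability theory based on uniform BMO control'' you invoke to pass to the limit is not available at this level of generality: BMO-based stability results for quadratic BSDEs require the driver to be (locally) Lipschitz or convex/concave in $z$, whereas under (B3) $\bar f$ is merely continuous in $(y,z)$. With only a uniform $\bbS^\infty$ bound and a uniform BMO bound you can extract a weak $\bbH^2$ limit of $V^{(k)}$, but you cannot identify the limit equation without the strong convergence that the monotone stability theorem provides. The fix is exactly the alternative you mention in passing: drop the $z$-truncation and use Kobylanski's monotone (sup/inf-convolution) regularization after the exponential transform, which is what the paper does. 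A minor shared point: since $Z^{(k)}$ only converges in $\bbH^2$, the $\limsup$ identification of $d$ should be taken along a subsequence realizing $dt\otimes d\bbP$-a.e.\ convergence; you state this correctly, and the paper is implicitly doing the same.
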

	\begin{proof}
		The existence of solution $(F,U,V)$ for BSDE \eqref{decoupledbsde} can be seen in \cite{kobylanski2000backward}. In particular, $U$ is bounded and $V\cdot W$ is a BMO martingale (see \cite{briand2013simple}). Therefore, without loss of generality, we assume that
		\begin{align*}
			|f(t,x,y,z)|\leq C(1+|z|^2)\qquad\text{and}\qquad
			|g(t,x,y,z)|\leq C,
		\end{align*}
		and therefore, (H3) holds.
		
		On the other hand, by \cite{kobylanski2000backward}, there is a sequence of measurable functions $\theta_k(t,x,y,z)$ such that
		\begin{itemize}
			\item $\theta_k(t,x,y,z)$ is uniformly Lipschitz in $(y,z)$
			\item For the solution $(Y^{(k)}, Z^{(k)})$ of BSDE
			\begin{align*}
				Y^{(k)}_t=\exp(Lh(F_t))+\int_t^T \theta_k(s, F_s, Y^{(k)}_s, Z^{(k)}_s)-\int_t^TZ^{(k)}_sdW_s,
			\end{align*}
			we have
			\begin{align*}
				\lim_{k\to\infty} \frac{\log(Y^{(k)}_t)}{2L}&= U_t\text{ uniformly in }t\\
				\lim_{k\to\infty} \frac{Z^{(k)}}{2LY^{(k)}}&= V\text{ in } \bbH^2.
			\end{align*}
		\end{itemize}
		Here, $L$ is a constant determined by coefficients $h$ and $\bar f$.
		
		From Proposition \ref{decoupledLip}, there are measurable functions $u_k:[0,T]\times\bbR^m\to\bbR$ and $d_k:[0,T]\times\bbR^m\to\bbR^{1\times n}$ such that $Y^{(k)}_t=u_k(t,F_t)$ and $Z^{(k)}_t=d_k(t, F_t)$. Therefore, if we let \(u(t,x):=\limsup_{k\to\infty}u_k(t,x)\) and \(d^{i}(t,x):=\limsup_{k\to\infty}d_k^{i}(t,x)\), where $d=(d^i)_{1\leq i\leq n}$, then $U_t=u(t,F_t)$ and $V_t=d(t, F_t)$. This proves (H2).
	\end{proof}
	\begin{proposition}\label{B4H23}
		{\rm (F3)} and {\rm (B4)} imply {\rm (H2)} and {\rm (H3)}.
	\end{proposition}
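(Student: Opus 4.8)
The plan is to mirror the strategy of Propositions \ref{B1H23}--\ref{B3H23}: first construct a solution of the decoupled BSDE \eqref{decoupledbsde} together with a Markovian representation $(U_t,V_t)=(u(t,F_t),d(t,F_t))$, and then verify the Girsanov martingale condition (H3). The genuinely new feature, and the main obstacle, is the term $V_tg(t,F_t,U_t,V_t)$ in the driver $\bar f=f+zg$: under (B4) the field $g$ grows linearly in $x$, so the $|z|$-coefficient of $\bar f$ is of order $1+|F_t|$, which is unbounded along the non-degenerate (hence unbounded) diffusion $F$. This blocks a direct appeal to the linear-growth existence result of \cite{hamadene1997bsdes} as used in Proposition \ref{B1H23}, and it also means $g(s,F_s,U_s,V_s)$ fails to be bounded even when $U$ is, so (H3) cannot be read off from boundedness of $g$ as in Propositions \ref{B1H23}--\ref{B3H23}.

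First I would truncate $g$, and the $y$-argument of $f$, to obtain bounded coefficients $g_N,f_N$ that still obey the growth bounds of (B4) with constants independent of $N$. For the truncated driver $\bar f_N=f_N+zg_N$ the coefficient of $|z|$ is bounded, so $\bar f_N$ has the polynomial-in-$x$, linear-in-$(y,z)$ growth required by Remark 27.3 of \cite{hamadene1997bsdes}. Combined with the $L^2$-domination of $(b,\sigma)$ under (F3) (Proposition \ref{H1sufficient}) and Proposition \ref{decoupledLip}, this yields a solution $(U^N,V^N)$ of the truncated BSDE with a Markovian representation $U^N_t=u_N(t,F_t)$, $V^N_t=d_N(t,F_t)$.

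The heart of the argument is an a priori bound on $U^N$ uniform in $N$, extracted from the \emph{componentwise} growth $|f^i(t,x,y,z)|\le C(1+|x|^r+|y^i|)$. Because $g_N$ is bounded, the measure $\tilde\bbP_N$ with $d\tilde\bbP_N/d\bbP=\scE_T\brak{\int_0^\cdot g_N(s,F_s,U^N_s,V^N_s)^\intercal\,dW_s}$ is well defined, and under $\tilde\bbP_N$ the term $V^Ng_N$ disappears, so each coordinate $U^{N,i}$ solves a BSDE whose driver $f^i$ is linear in $U^{N,i}$ alone. Applying It\^o's formula to $e^{at}|U^{N,i}_t|^2$ for $a$ large and taking $\tilde\bbP_N$-conditional expectations, exactly as in the proof of Proposition \ref{B1H23}, bounds each $|U^{N,i}_t|$ by terms built from $h^i(F_T)$ and $\int_t^T(1+|F_s|^r)\,ds$; this is a coordinatewise estimate and so sidesteps the absence of a multidimensional comparison principle. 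Since $b+\sigma g_N$ has linear growth uniformly in $N$ and $\sigma$ is constant, the Gaussian moment bounds for $F$ furnished by the density estimates of Proposition \ref{L2dominance} make the bounds uniform in $N$. When $r=0$ this gives a uniform bound $\norm{U^N}_{\bbS^\infty}\le C_y$, so taking $N>C_y$ deactivates the truncation and $(U^N,V^N)$ already solves \eqref{decoupledbsde}; when $r>0$ it gives uniform $\bbH^2$ and moment bounds, permitting passage to the limit and extraction of $u:=\limsup_N u_N$ and $d:=\limsup_N d_N$ precisely as in Propositions \ref{B2H23} and \ref{B3H23}, so that $(u(t,F_t),d(t,F_t))$ solves \eqref{decoupledbsde}. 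This establishes (H2).

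Finally, for (H3) I would replace the boundedness argument by a linear-growth criterion. With $U$ bounded (when $r=0$) or with $\rho_r\equiv0$ (when $r>0$), one has $|g(s,F_s,U_s,V_s)|\le C(1+|F_s|)$, while a Gr\"onwall estimate for the forward SDE gives $\sup_{u\le s}|F_u|\le C(1+\sup_{u\le s}|W_u|)$; hence the integrand grows at most linearly in the Brownian path, and the criterion of Bene\v{s} guarantees that $\scE\brak{\int_0^\cdot g(s,F_s,U_s,V_s)^\intercal\,dW_s}$ is a true martingale, which is (H3). The step I expect to be most delicate is the uniform-in-$N$ control and the limit passage for $r>0$, where the change of measure $\tilde\bbP_N$ interacts with the polynomial terminal and growth data, and one must transfer the $\tilde\bbP_N$-estimates back under $\bbP$ using the exponential integrability afforded by the Gaussian bounds on $F$.
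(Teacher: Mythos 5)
Your verification of (H3) is exactly the paper's argument: under (B4) with $\sigma$ constant, a Gr\"onwall estimate gives $|F_t|\le C(1+\max_{s\le t}|W_s|)$, and since $|g(s,F_s,U_s,V_s)|\le C(1+|F_s|+\rho_r(|U_s|))$ with either $\rho_r\equiv 0$ or $U$ bounded, the Bene\v{s} criterion of Corollary III.5.16 of \cite{karatzas1998brownian} yields the martingale property. No issue there.

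The (H2) part, however, has a genuine gap, and it sits precisely at the point you yourself flag as the main obstacle. To invoke the linear-growth existence theory of \cite{hamadene1997bsdes} you make $g_N$ \emph{bounded}; but $g$ is unbounded because of its linear growth in $x$, not in $y$, so no a priori bound on $U^N$ can deactivate this truncation: for every finite $N$ the nondegenerate diffusion $F$ exits the ball where $C(1+|x|+\rho_r(|y|))\le N$ with positive probability. Hence the $r=0$ branch of your argument (``taking $N>C_y$ deactivates the truncation'') does not close --- the bound $\norm{U^N}_{\bbS^\infty}\le C_y$ removes only a $y$-truncation, not the $x$-induced unboundedness of $g$. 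In the $r>0$ branch you are left with uniform-in-$N$ moment bounds only; these do not give convergence of $(U^N,V^N)$, and the $\limsup_N u_N$ device of Propositions \ref{B2H23} and \ref{B3H23} is legitimate there only because the approximating processes are already known to converge (in $\bbS^\infty$, resp.\ uniformly) to a solution supplied by an external theorem --- a $\limsup$ of nonconvergent approximants need not solve anything. The paper sidesteps all of this: it establishes (H2) by citing Theorem 3.1 of \cite{mu2015one}, an existence result for Markovian BSDEs whose generator has \emph{stochastic} linear growth (the $|z|$-coefficient is allowed to grow like $|F_t|$) together with the componentwise bound $C(1+|x|^r+|y^i|)$, which is exactly the structure of $\bar f=f+zg$ under (F3) and (B4). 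A self-contained proof would have to reproduce that argument --- truncation of the $z$-coefficient in $x$ combined with a stability estimate controlling the error on $\{\sup_t|F_t|>M\}$ --- rather than rely on an a priori bound on $U$ alone.
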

	\begin{proof}
		For (H2), the proof is identical to that for Theorem 3.1 of \cite{mu2015one}. Let $E$ be the matrix defined in the proof of Proposition \ref{H1sufficient}. Then,
		\begin{align*}
			dP_t=E^\intercal b(t,\brak{E^\intercal}^{-1} P_t)dt+ E^\intercal\sigma dW_t;\qquad P_0=E^\intercal x
		\end{align*}
		for $P=E^\intercal F$.
		Note that there exists $ C'>0$ such that
		\begin{align*}
			|P_t|&\leq |{E^\intercal}x|+\int_0^t|E^\intercal b(s,\brak{E^\intercal}^{-1}P_s)|ds+|E^\intercal\sigma W_t|\\
			&\leq C'+|E^\intercal\sigma W_t|+C'\int_0^t|P_s|ds
		\end{align*}
		By Gronwall's inequality, there exists a constant $C''>0$ such that
		\[
		|P_t|\leq C''\left(1+\max_{s\in[0,t]}|E^\intercal\sigma W_s|\right).
		\]
		This implies
		\begin{align*}
			|F_t|= |(E^\intercal)^{-1} P_t|\leq C'''\left(1+\max_{s\in[0,t]}|W_s|\right)
		\end{align*}
		for a constant $C'''>0$. Therefore, by the Bene\v{s} condition (see Corollary III.5.16 of \cite{karatzas1998brownian}), (H3) holds.
	\end{proof}
	\subsection{Verification of (H4)}
	Let us prove (H4) under the assumptions made in Theorem \ref{mainthm}.
	\begin{proof}
		Note that if {\rm (H1)} and either one of {\rm (B1), (B2)} or {\rm (B3)} hold, $g(t, F_t, U_t,V_t)$ is bounded because $U_t$ is bounded. Therefore, we only need to prove that (H4) holds for $g_N(t,x,y,z):=g(t,x,Ny/\brak{|y|\vee N},z)$. If either (F1) or (F2) holds, then $\tilde b(t,x):=b(t,x)+\sigma(t,x)g_N(t,x,u(t,x),d(t,x))$ is bounded because $\sigma$ and $g_N$ are bounded. Therefore, conditions (F1) or (F2) hold with $\tilde b$ instead of $b$. Likewise, if (F3) holds, then there exists a positive constant $\kappa$ such that, 
		$$
		|\tilde b(t,0)|+\sup_{|x-x'|\leq 1}|\tilde b(t,x)-\tilde b(t,x')|\leq \kappa
		$$
		for all $t\in[0,T], x,x'\in\bbR^m$.
		By Proposition \ref{H1sufficient}, (H4) holds.
		
		On the other hand, assume (F3) and (B4) hold. Note that, if $r=0$ in (B4), then $|U|$ is bounded; therefore, $|g(t,x,y,z)|\leq K(1+|x|)$ for some $K$. If $r>0$, $|g(t,x,y,z)|\leq K(1+|x|)$ for $K=C$. Therefore, we have
		\[
		|\tilde b(t,x)|\leq |b(t,x)|+|\sigma(t,x)||g(t,x,y,z)|\leq C'(1+|x|)
		\]
		for a non-negative constant $C'$. Again, by Proposition \ref{H1sufficient}, (H4) holds.
	\end{proof}
	\subsection{Uniqueness}
	Assume the conditions in Theorem \ref{mainthm}.
	Let us prove that either (B2), (U1), or (U2) implies the uniqueness of the solution for \eqref{fbsde1}--\eqref{fbsde2}.
	\begin{proof}
		When $f(t,x,y,z)+zg(t,x,y,z)$ is Lipschitz, the solution for \eqref{decoupledbsde} is unique by \cite{pardoux1990adapted}. If $r=0$ and $d=1$, \cite{kobylanski2000backward} proved the uniqueness of the solution for \eqref{decoupledbsde}. The uniqueness of the solution under the condition (B2) was proved by \cite{hu2016multi}. By applying Lemma \ref{girsanovlemma}, the uniqueness of the solution for \eqref{fbsde1}--\eqref{fbsde2} is proved.
	\end{proof}

	\bibliographystyle{abbrvnat}
	\bibliography{mybib}

\begin{thebibliography}{41}
\providecommand{\natexlab}[1]{#1}
\providecommand{\url}[1]{\texttt{#1}}
\expandafter\ifx\csname urlstyle\endcsname\relax
  \providecommand{\doi}[1]{doi: #1}\else
  \providecommand{\doi}{doi: \begingroup \urlstyle{rm}\Url}\fi

\bibitem[Antonelli(1993)]{antonelli1993backward}
F.~Antonelli.
\newblock Backward-forward stochastic differential equations.
\newblock \emph{The Annals of Applied Probability}, pages 777--793, 1993.

\bibitem[Aronson(1967)]{aronson1967bounds}
D.~G. Aronson.
\newblock Bounds for the fundamental solution of a parabolic equation.
\newblock \emph{Bulletin of the American Mathematical society}, 73\penalty0
  (6):\penalty0 890--896, 1967.

\bibitem[Barles and Lesigne(1997)]{barles1997sde}
G.~Barles and E.~Lesigne.
\newblock {SDE}, {BSDE} and {PDE}.
\newblock In N.~El~Karoui and L.~Mazliak, editors, \emph{Backward stochastic
  differential equations}, volume 364 of \emph{Pitman Research Notes in
  Mathematics Series}, pages 47--82. Longman, Harlow, 1997.

\bibitem[Bismut(1973)]{bismut1973conjugate}
J.-M. Bismut.
\newblock Conjugate convex functions in optimal stochastic control.
\newblock \emph{Journal of Mathematical Analysis and Applications}, 44\penalty0
  (2):\penalty0 384--404, 1973.

\bibitem[Briand and Elie(2013)]{briand2013simple}
P.~Briand and R.~Elie.
\newblock A simple constructive approach to quadratic {BSDEs} with or without
  delay.
\newblock \emph{Stochastic processes and their applications}, 123\penalty0
  (8):\penalty0 2921--2939, 2013.

\bibitem[Carmona(2016)]{carmona2016lectures}
R.~Carmona.
\newblock \emph{Lectures on {BSDEs}, stochastic control, and stochastic
  differential games with financial applications}, volume~1.
\newblock SIAM, 2016.

\bibitem[Carmona et~al.(2010)Carmona, Fehr, Hinz, and
  Porchet]{carmona2010market}
R.~Carmona, M.~Fehr, J.~Hinz, and A.~Porchet.
\newblock Market design for emission trading schemes.
\newblock \emph{SIAM Rev.}, 52\penalty0 (3):\penalty0 403--452, 2010.

\bibitem[Carmona et~al.(2013)Carmona, Delarue, Espinosa, Touzi,
  et~al.]{carmona2013singular}
R.~Carmona, F.~Delarue, G.-E. Espinosa, N.~Touzi, et~al.
\newblock Singular forward--backward stochastic differential equations and
  emissions derivatives.
\newblock \emph{The Annals of Applied Probability}, 23\penalty0 (3):\penalty0
  1086--1128, 2013.

\bibitem[Chen et~al.(2018)Chen, Ma, and Yin]{chen2018forward}
J.~Chen, J.~Ma, and H.~Yin.
\newblock Forward-backward sdes with discontinuous coefficients.
\newblock \emph{Stochastic Analysis and Applications}, 36\penalty0
  (2):\penalty0 274--294, 2018.

\bibitem[Cheridito and Nam(2015)]{cheridito2015multidimensional}
P.~Cheridito and K.~Nam.
\newblock Multidimensional quadratic and subquadratic {BSDEs} with special
  structure.
\newblock \emph{Stochastics An International Journal of Probability and
  Stochastic Processes}, 87\penalty0 (5):\penalty0 871--884, 2015.

\bibitem[Chitashvili and Mania(1996)]{chitashvili1996generalized}
R.~Chitashvili and M.~Mania.
\newblock Generalized {It{\^o}} formula and derivation of {Bellman’s}
  equation.
\newblock In H.~J. Engelbert, H.~F{\"o}llmer, and J.~Zabczyk, editors,
  \emph{Stochastic processes and related topics}, volume~10 of
  \emph{Stochastics monographs}, pages 1--21. Gordon and Breach, Amsterdam,
  1996.

\bibitem[{\c{C}}{\i}nlar(2011)]{cinlar2011probability}
E.~{\c{C}}{\i}nlar.
\newblock \emph{Probability and stochastics}, volume 261.
\newblock Springer Science \& Business Media, 2011.

\bibitem[{\c{C}}inlar et~al.(1980){\c{C}}inlar, Jacod, Protter, and
  Sharpe]{cinlar1980semimartingales}
E.~{\c{C}}inlar, J.~Jacod, P.~Protter, and M.~J. Sharpe.
\newblock Semimartingales and markov processes.
\newblock \emph{Z. Wahrsch. Verw. Gebiete}, 54\penalty0 (2):\penalty0 161--219,
  1980.

\bibitem[El~Karoui et~al.(1997)El~Karoui, Peng, and Quenez]{el1997backward}
N.~El~Karoui, S.~Peng, and M.~C. Quenez.
\newblock Backward stochastic differential equations in finance.
\newblock \emph{Mathematical finance}, 7\penalty0 (1):\penalty0 1--71, 1997.

\bibitem[Gy{\"o}ngy and Mart{\'\i}nez(2001)]{gyongy2001stochastic}
I.~Gy{\"o}ngy and T.~Mart{\'\i}nez.
\newblock On stochastic differential equations with locally unbounded drift.
\newblock \emph{Czechoslovak Mathematical Journal}, 51\penalty0 (4):\penalty0
  763--783, 2001.

\bibitem[Hamad{\`e}ne et~al.(1997)Hamad{\`e}ne, Lepeltier, and
  Peng]{hamadene1997bsdes}
S.~Hamad{\`e}ne, J.-P. Lepeltier, and S.~Peng.
\newblock {BSDEs} with continuous coefficients and stochastic differential
  games.
\newblock In N.~El~Karoui and L.~Mazliak, editors, \emph{Backward stochastic
  differential equations}, volume 364 of \emph{Pitman Research Notes in
  Mathematics Series}, pages 115--128. Longman, Harlow, 1997.

\bibitem[Hu and Peng(1995)]{hu1995solution}
Y.~Hu and S.~Peng.
\newblock Solution of forward-backward stochastic differential equations.
\newblock \emph{Probability Theory and Related Fields}, 103\penalty0
  (2):\penalty0 273--283, 1995.

\bibitem[Hu and Tang(2016)]{hu2016multi}
Y.~Hu and S.~Tang.
\newblock Multi-dimensional backward stochastic differential equations of
  diagonally quadratic generators.
\newblock \emph{Stochastic Processes and their Applications}, 126\penalty0
  (4):\penalty0 1066--1086, 2016.

\bibitem[Karatzas and Shreve(1998)]{karatzas1998brownian}
I.~Karatzas and S.~E. Shreve.
\newblock \emph{Brownian motion and stochastic calculus}, volume 113.
\newblock Springer, 3 edition, 1998.

\bibitem[Kim and Krylov(2007)]{kim2007parabolic}
D.~Kim and N.~V. Krylov.
\newblock Parabolic equations with measurable coefficients.
\newblock \emph{Potential analysis}, 26\penalty0 (4):\penalty0 345--361, 2007.

\bibitem[Kim et~al.(2017)Kim, Kim, and Lim]{kim2017lq}
I.~Kim, K.-H. Kim, and S.~Lim.
\newblock An {$L^q$} ({$L^p$})-theory for the time fractional evolution
  equations with variable coefficients.
\newblock \emph{Advances in Mathematics}, 306:\penalty0 123--176, 2017.

\bibitem[Kobylanski(2000)]{kobylanski2000backward}
M.~Kobylanski.
\newblock Backward stochastic differential equations and partial differential
  equations with quadratic growth.
\newblock \emph{Annals of probability}, 28\penalty0 (2):\penalty0 558--602,
  2000.

\bibitem[Krylov(2007)]{krylov2007parabolic}
N.~Krylov.
\newblock Parabolic and elliptic equations with vmo coefficients.
\newblock \emph{Communications in Partial Differential Equations}, 32\penalty0
  (3):\penalty0 453--475, 2007.

\bibitem[Krylov(1980)]{krylov1980controlled}
N.~V. Krylov.
\newblock \emph{Controlled diffusion processes}, volume~14 of
  \emph{Applications of Mathematics}.
\newblock Springer-Verlag, New York-Berlin, 1980.
\newblock Translated from the Russian by A. B. Aries.

\bibitem[Krylov(1999)]{krylov1999analytic}
N.~V. Krylov.
\newblock An analytic approach to {SPDE}s.
\newblock In \emph{Stochastic partial differential equations: six
  perspectives}, volume~64 of \emph{Math. Surveys Monogr.}, pages 185--242.
  Amer. Math. Soc., 1999.

\bibitem[Le~Gall(1984)]{le1984one}
J.~F. Le~Gall.
\newblock One-dimensional stochastic differential equations involving the local
  times of the unknown process.
\newblock In A.~Truman and D.~Williams, editors, \emph{Stochastic Analysis and
  Applications}, volume 1095 of \emph{Lecture Notes in Mathematics}, pages
  51--82. Springer, Berlin, Heidelberg, 1984.

\bibitem[Luo et~al.(2020)Luo, Menoukeu-Pamen, and Tangpi]{luo2020strong}
P.~Luo, O.~Menoukeu-Pamen, and L.~Tangpi.
\newblock Strong solutions of forward-backward stochastic differential
  equations with measurable coefficients.
\newblock \emph{arXiv e-prints}, pages arXiv--2001, 2020.

\bibitem[Ma et~al.(1994)Ma, Protter, and Yong]{ma1994solving}
J.~Ma, P.~Protter, and J.~Yong.
\newblock Solving forward-backward stochastic differential equations
  explicitly—a four step scheme.
\newblock \emph{Probability theory and related fields}, 98\penalty0
  (3):\penalty0 339--359, 1994.

\bibitem[Ma et~al.(1999)Ma, Morel, and Yong]{ma1999forward}
J.~Ma, J.-M. Morel, and J.~Yong.
\newblock \emph{Forward-backward stochastic differential equations and their
  applications}.
\newblock Number 1702. Springer Science \& Business Media, 1999.

\bibitem[Ma et~al.(2015)Ma, Wu, Zhang, Zhang, et~al.]{ma2015well}
J.~Ma, Z.~Wu, D.~Zhang, J.~Zhang, et~al.
\newblock On well-posedness of forward-backward sdes--a unified approach.
\newblock \emph{Annals of Applied Probability}, 25\penalty0 (4):\penalty0
  2168--2214, 2015.

\bibitem[Mania and Tevzadae(2001)]{mania2001semimartingale}
M.~Mania and R.~Tevzadae.
\newblock Semimartingale functions of a class of diffusion processes.
\newblock \emph{Theory of Probability \& Its Applications}, 45\penalty0
  (2):\penalty0 337--343, 2001.

\bibitem[Maugeri et~al.(2000)Maugeri, Palagachev, and
  Softova]{maugeri2000elliptic}
A.~Maugeri, D.~K. Palagachev, and L.~G. Softova.
\newblock \emph{Elliptic and parabolic equations with discontinuous
  coefficients}, volume 109 of \emph{Mathematical Research}.
\newblock Wiley-VCH Verlag Berlin GmbH, 2000.

\bibitem[Menoukeu-Pamen and Mohammed(2019)]{menoukeu2019flows}
O.~Menoukeu-Pamen and S.~E. Mohammed.
\newblock Flows for singular stochastic differential equations with unbounded
  drifts.
\newblock \emph{Journal of Functional Analysis}, 277\penalty0 (5):\penalty0
  1269--1333, 2019.

\bibitem[Menozzi et~al.(2021)Menozzi, Pesce, and Zhang]{menozzi2021density}
S.~Menozzi, A.~Pesce, and X.~Zhang.
\newblock Density and gradient estimates for non degenerate brownian sdes with
  unbounded measurable drift.
\newblock \emph{Journal of Differential Equations}, 272:\penalty0 330--369,
  2021.

\bibitem[Mu and Wu(2015)]{mu2015one}
R.~Mu and Z.~Wu.
\newblock One kind of multiple dimensional markovian bsdes with stochastic
  linear growth generators.
\newblock \emph{Advances in Difference Equations}, 2015\penalty0 (1):\penalty0
  1--15, 2015.

\bibitem[Pardoux and Peng(1990)]{pardoux1990adapted}
E.~Pardoux and S.~Peng.
\newblock Adapted solution of a backward stochastic differential equation.
\newblock \emph{Systems \& Control Letters}, 14\penalty0 (1):\penalty0 55--61,
  1990.

\bibitem[Pardoux and Peng(1992)]{pardoux1992backward}
E.~Pardoux and S.~Peng.
\newblock Backward stochastic differential equations and quasilinear parabolic
  partial differential equations.
\newblock In B.~Rozovskii and R.~Sowers, editors, \emph{Stochastic Partial
  Differential Equations and Their Applications}, volume 176 of \emph{Lecture
  Notes in Control and Information Sciences}, pages 200--217. Springer, Berlin,
  Heidelberg, 1992.

\bibitem[Pardoux and Tang(1999)]{pardoux1999forward}
E.~Pardoux and S.~Tang.
\newblock Forward-backward stochastic differential equations and quasilinear
  parabolic pdes.
\newblock \emph{Probability Theory and Related Fields}, 114\penalty0
  (2):\penalty0 123--150, 1999.

\bibitem[Peng and Wu(1999)]{peng1999fully}
S.~Peng and Z.~Wu.
\newblock Fully coupled forward-backward stochastic differential equations and
  applications to optimal control.
\newblock \emph{SIAM Journal on Control and Optimization}, 37\penalty0
  (3):\penalty0 825--843, 1999.

\bibitem[Yong(1997)]{yong1997finding}
J.~Yong.
\newblock Finding adapted solutions of forward--backward stochastic
  differential equations: method of continuation.
\newblock \emph{Probability Theory and Related Fields}, 107\penalty0
  (4):\penalty0 537--572, 1997.

\bibitem[Yong(2010)]{yong2010forward}
J.~Yong.
\newblock Forward-backward stochastic differential equations with mixed
  initial-terminal conditions.
\newblock \emph{Transactions of the American Mathematical Society},
  362\penalty0 (2):\penalty0 1047--1096, 2010.

\end{thebibliography}
	
	\appendix
	\section{Markovian Solution of Decoupled FBSDE}
	In this appendix, we provide sufficient conditions for (H1) and (H3) under the assumption that the forward SDE \eqref{decoupledfsde} has a unique strong solution.
	
	It is well known that a decoupled FBSDE with a Lipschitz BSDE driver has a unique solution if the forward SDE has a unique strong solution. The Markovian property of the solution has been proved in Theorem 4.1 of \cite{el1997backward} and Theorem 14.5 of \cite{barles1997sde} under the assumption that the forward SDE has Lipschitz coefficients. The following theorem slightly generalizes the existence and uniqueness results in the sense that we do not require $b(t,x)$ and $\sigma(t,x)$ to be Lipschitz with respect to $x$ and we allow linear growth of $\bar f(t,x,y,z)$ with respect to $(y,z)$.
	\begin{proposition}\label{decoupledLip}
		Let $\bar f(t,x,y,z):=f(t,x,y,z)+zg(t,x,y,z)$ for jointly $\scB$-measurable functions $(f,g):[0,T]\times\bbR^m\times\bbR^d\times\bbR^{d\times n}\to\bbR^d\times\bbR^{n}$.
		Assume the following conditions: there exist $C>0, p\geq 2,$ and $r\geq \half$ such that
		\begin{itemize}
			\item The forward SDE \eqref{decoupledfsde} has a unique strong solution $F$ and 
			$\bbE \sup_{t\in[0,T]}|F_t|^{pr}\leq C$
			\item \(|h(x)|\leq C(1+|x|^r)\) for all $x\in\bbR^m$
			\item \(\abs{\bar f(t,x,y,z)}\leq C(1+|x|^r+|y|+|z|)\) for all $(t,x,y,z)\in[0,T]\times\bbR^m\times\bbR^d\times\bbR^{d\times n}$
			\item \(\abs{\bar f(t,x,y,z)-\bar f(t,x,y',z')}\leq C\brak{|y-y'|+|z-z'|}\) for all \((t,x,y,z),(t,x,y',z')\in[0,T]\times\bbR^m\times\bbR^d\times\bbR^{d\times n}\)
		\end{itemize}
		Then, (H2) holds. Moreover, the solution $(F,U,V)\in\bbS^{pr}(\bbR^m)\times\bbS^{p}(\bbR^{d})\times\bbH^{p}(\bbR^{d\times n})$ is unique.
	\end{proposition}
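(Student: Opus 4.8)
The plan is to obtain the solution of the decoupled backward equation from the classical Lipschitz BSDE theory, and then to promote it to a Markovian representation $U_t=u(t,F_t),\ V_t=d(t,F_t)$ through a Picard iteration whose iterates are shown, inductively, to be deterministic functions of the current state $F_t$.

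First I would check the integrability of the data. Since $|h(x)|\leq C(1+|x|^r)$ and $\bbE\sup_{t\in[0,T]}|F_t|^{pr}\leq C$, the terminal value satisfies $\bbE|h(F_T)|^p<\infty$; the linear-growth bound $\abs{\bar f(t,x,y,z)}\leq C(1+|x|^r+|y|+|z|)$ gives $\bbE\brak{\int_0^T|\bar f(s,F_s,0,0)|\,ds}^p\leq C'\brak{1+\bbE\sup_{t}|F_t|^{pr}}<\infty$. Because $\bar f(t,F_t,\cdot,\cdot)$ is globally Lipschitz in $(y,z)$, uniformly in $(t,\omega)$, the standard contraction and $L^p$ a priori estimates (cf. \cite{pardoux1990adapted}) yield a unique solution $(U,V)\in\bbS^p(\bbR^d)\times\bbH^p(\bbR^{d\times n})$ of
\[
U_t=h(F_T)+\int_t^T\bar f(s,F_s,U_s,V_s)\,ds-\int_t^TV_s\,dW_s,
\]
and, together with the forward estimate, this places $(F,U,V)$ in $\bbS^{pr}\times\bbS^p\times\bbH^p$ and settles the uniqueness assertion.

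The heart of the proof is the Markovian representation. Since \eqref{decoupledfsde} has a unique strong solution, its martingale problem is well posed, so $F$ is a Markov process. I would build $(U,V)$ by Picard iteration: put $(U^{(0)},V^{(0)})=(0,0)$ and let $(U^{(k+1)},V^{(k+1)})$ solve the BSDE with frozen driver $\bar f(s,F_s,U^{(k)}_s,V^{(k)}_s)$. The inductive hypothesis is that $U^{(k)}_s=u_k(s,F_s)$ and $V^{(k)}_s=d_k(s,F_s)$ for Borel functions $u_k,d_k$. Granting it at level $k$, the driver equals $\Psi_k(s,F_s):=\bar f(s,F_s,u_k(s,F_s),d_k(s,F_s))$, a deterministic function of $(s,F_s)$, so
\[
U^{(k+1)}_t=\cE{h(F_T)+\int_t^T\Psi_k(s,F_s)\,ds}{\scF_t}.
\]
By the Markov property the right-hand side is a function of $F_t$, namely $U^{(k+1)}_t=u_{k+1}(t,F_t)$ with $u_{k+1}(t,x)=\bbE\edg{h(F^{(t,x)}_T)+\int_t^T\Psi_k(s,F^{(t,x)}_s)\,ds}$, where $F^{(t,x)}$ is the flow of \eqref{decoupledfsde} started from $x$ at time $t$ (the process $X^{(t,x)}$ of Definition \ref{def:L2dom}). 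To represent the martingale integrand $V^{(k+1)}$ itself as a function $d_{k+1}(t,F_t)$, I would invoke the functional representation of square-integrable martingales of a Markov process due to \cite{cinlar1980semimartingales}, which produces a jointly measurable $d_{k+1}$ with $V^{(k+1)}_t=d_{k+1}(t,F_t)$ for $dt\otimes d\bbP$-a.e. $(t,\omega)$.

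Finally I would pass to the limit. The Lipschitz property makes the Picard scheme a contraction, so $U^{(k)}\to U$ in $\bbS^p$ and $V^{(k)}\to V$ in $\bbH^p$; along a subsequence the convergence is $dt\otimes d\bbP$-a.e. Setting $u(t,x):=\limsup_{k}u_k(t,x)$ and $d(t,x):=\limsup_{k}d_k(t,x)$ componentwise produces Borel functions, and since for fixed $\omega$ the $\limsup$ in $k$ commutes with evaluation at the fixed state $F_t(\omega)$, we obtain $u(t,F_t)=\limsup_k U^{(k)}_t=U_t$ and $d(t,F_t)=\limsup_k V^{(k)}_t=V_t$, $dt\otimes d\bbP$-a.e., which is exactly (H2). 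The main obstacle is not the BSDE theory but the step producing $V^{(k+1)}=d_{k+1}(t,F_t)$: the conditional-expectation argument gives $U^{(k+1)}$ as a state function directly, whereas the integrand $V^{(k+1)}$ arises only through the martingale representation theorem and is a priori merely progressively measurable. Identifying it with a deterministic function of the current state is precisely where the Markov structure of $F$, via \cite{cinlar1980semimartingales}, is essential.
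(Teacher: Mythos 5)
Your proposal is correct and follows essentially the same route as the paper: the same integrability check feeding the classical Lipschitz $L^p$ theory of \cite{pardoux1990adapted}, the same Picard iteration with frozen driver whose iterates are shown inductively to be state functions (the $u_k$ via the Markov property and conditional expectation, the $d_k$ via the functional martingale representation of \cite{cinlar1980semimartingales}), and the same $\limsup$ passage to the limit. The only detail the paper spells out that you elide is the replacement of the universally measurable integrand produced by \cite{cinlar1980semimartingales} by a Borel version agreeing $dt\otimes d\bbP$-almost everywhere.
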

	\begin{Remark}
		We do not need the nondegeneracy of $\sigma$ in this proposition.
	\end{Remark}
	\begin{proof}
		Note that, \(\bbE|g(F_T)|^p\leq C^p\bbE(1+|F_T|^r)^p\leq 2^{p-1}C^p\brak{1+\bbE|F_T|^{pr}}<\infty\) and
		\begin{align*}
			\bbE \edg{\brak{\int_0^T|\bar f(t,F_t,0,0)|^2dt}^{p/2}}
			&\leq C^p\bbE \edg{\brak{\int_0^T2\brak{1+|F_t|^{2r}}dt}^{p/2}}\\
			&\leq 2^{p/2}C^pT^{p/2}\brak{1+\bbE\sup_{t\in[0,T]}|F_t|^{pr}}<\infty.
		\end{align*}
		Therefore, from the classical result of \cite{pardoux1990adapted}, the BSDE \eqref{decoupledbsde} has a unique solution $(Y,Z)\in\bbS^p(\bbR^d)\times\bbH^p(\bbR^{d\times n})$.
		
		For \(k=0,1,2,...\), let us define Borel measurable functions \((u_k,d_k):[0,T]\times\bbR^m\ni(t,x)\mapsto (u_k(t,x),d_k(t,x))\in\bbR^d\times\bbR^{d\times n}\) as follows: let $u_0\equiv0,d_0\equiv 0$, \(Y^{(k)}_t=u_k(t,F_t)\) and \(Z^{(k)}_t=d_k(t,F_t)\) \(dt\otimes d\bbP\)-almost everywhere for \((Y^{(k)}, Z^{(k)})\), which is the unique solution of
		\begin{align*}
			dY^{(k)}_t:=-\bar f(t,F_t,u_{k-1}(t,F_t), d_{k-1}(t,F_t))dt+Z^{(k)}_tdB_t;\qquad Y^{(k)}_T=g(F_T).
		\end{align*}
		The well-definedness of $(u_k,d_k)_{k=0,1,2,...}$ is proved in Lemma \ref{lem:ukdk}.It is well known that $Y^{(k)}\to Y$ in $\bbS^p$ and $Z^{(k)}\to Z$ in $\bbH^p$.
		If we let \(u^i(t,x):=\limsup_{k\to\infty}u_k^i(t,x)\) and \(d^{ij}(t,x):=\limsup_{k\to\infty}d_k^{ij}(t,x)\),
		where $u=(u^i)_{1\leq i\leq d}$ and $d=(d^{ij})_{1\leq i\leq d,1\leq j\leq n}$, we have
		\begin{align*}
			u^i(t,P_t)&=(\limsup_{k\to\infty}u_k^i)(t,P_t)=\limsup_{k\to\infty}(u_k^i(t,P_t))=\lim_{k\to\infty}Y^{(k),i}_t=Y^i_t\\
			d^{ij}(t,P_t)&=(\limsup_{k\to\infty}d^{ij}_k)(t,P_t)=\limsup_{k\to\infty}(d^{ij}_k(t,P_t))=\lim_{k\to\infty}Z^{(k),ij}_t=Z^{ij}_t.
		\end{align*}
		Therefore, the claim is proved.
	\end{proof}
	
	Now, let us prove that $(u_k,d_k)_{k=0,1,2,...}$ are well defined. 
	\begin{lemma}\label{lem:ukdk}
		For all $k=0,1,2...$, we have that $(u_k,d_k)$ are well-defined. Moreover, $u_k(\cdot, F.)\in\bbS^p(\bbR^d)$ and $d_k(\cdot, F.)\in\bbH^p(\bbR^{d\times n})$.
	\end{lemma}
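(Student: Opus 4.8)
The plan is to argue by induction on $k$, the only substantial content being the Markovian representation of the $Z$-component at each step. The case $k=0$ is immediate: $u_0\equiv 0$ and $d_0\equiv 0$ are Borel, and the associated processes $u_0(\cdot,F_\cdot)=0\in\bbS^p(\bbR^d)$ and $d_0(\cdot,F_\cdot)=0\in\bbH^p(\bbR^{d\times n})$. So I would assume that $(u_{k-1},d_{k-1})$ are Borel measurable with $u_{k-1}(\cdot,F_\cdot)\in\bbS^p$ and $d_{k-1}(\cdot,F_\cdot)\in\bbH^p$, and construct $(u_k,d_k)$ with the same properties.

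First I would note that the $k$-th driver is frozen: set $\psi_s:=\bar f\brak{s,F_s,u_{k-1}(s,F_s),d_{k-1}(s,F_s)}$, which is progressively measurable and does \emph{not} depend on $(Y^{(k)},Z^{(k)})$. Using the linear growth $\abs{\bar f(s,x,y,z)}\leq C(1+|x|^r+|y|+|z|)$, the terminal growth $\abs{h(x)}\leq C(1+|x|^r)$, the moment bound $\bbE\sup_{t}|F_t|^{pr}\leq C$, and the induction hypothesis, I would show $h(F_T)\in L^p$ and $\psi\in\bbH^p$, so that $\xi:=h(F_T)+\int_0^T\psi_s\,ds\in L^p$. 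Since the driver is frozen, the BSDE degenerates into a martingale representation problem: the $L^p$-martingale $M_t:=\cE{\xi}{\scF_t}$ admits, by the martingale representation theorem, the form $M_t=\bbE\xi+\int_0^t Z^{(k)}_s\,dW_s$ with $Z^{(k)}\in\bbH^p$ via Burkholder--Davis--Gundy, and $Y^{(k)}_t:=M_t-\int_0^t\psi_s\,ds\in\bbS^p$ by Doob's inequality is the unique solution of the $k$-th BSDE.

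It then remains to exhibit Borel functions $(u_k,d_k)$ with $Y^{(k)}_t=u_k(t,F_t)$ and $Z^{(k)}_t=d_k(t,F_t)$. Here I would invoke the Markov property of $F$, which holds because the unique strong solvability of \eqref{decoupledfsde} makes the associated martingale problem well posed; note no nondegeneracy of $\sigma$ is required. Since, by the induction hypothesis, $\psi_s$ and $h(F_T)$ are deterministic measurable functionals of $F$ depending pointwise only on the current state, the conditional expectation defining $Y^{(k)}_t$ factors through $F_t$, giving $Y^{(k)}_t=u_k(t,F_t)$ for
\[
u_k(t,x):=\bbE\edg{h(F^{t,x}_T)+\int_t^T\bar f\brak{s,F^{t,x}_s,u_{k-1}(s,F^{t,x}_s),d_{k-1}(s,F^{t,x}_s)}ds},
\]
where $F^{t,x}$ is the solution of \eqref{decoupledfsde} started from $x$ at time $t$; the Borel measurability of $u_k$ in $(t,x)$ follows from the measurable dependence of the law of $F^{t,x}$ on $(t,x)$, and $u_k$ is determined $\mu$-a.e., which is all that is needed to feed it into the next iteration.

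The main obstacle is the representation of $Z^{(k)}$, which a priori is defined only $dt\otimes d\bbP$-almost everywhere through the abstract martingale representation theorem and carries no pointwise functional meaning. To produce a genuine Borel function $d_k$ with $Z^{(k)}_t=d_k(t,F_t)$ I would appeal to \cite{cinlar1980semimartingales}: the semimartingale $Y^{(k)}$ lives over the Markov process $F$, and the functional representation of additive functionals and of their martingale integrands over a Markov process yields a Borel $d_k$ with the required identity. Finally $u_k(\cdot,F_\cdot)=Y^{(k)}\in\bbS^p$ and $d_k(\cdot,F_\cdot)=Z^{(k)}\in\bbH^p$ by the estimates of the second paragraph, which closes the induction.
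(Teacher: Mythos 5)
Your proof is correct and follows essentially the same route as the paper: induction on $k$, $L^p$ estimates for the frozen driver to solve the $k$-th BSDE, the Markov property of $F$ to obtain $u_k$, and the additive-martingale representation theorem of \cite{cinlar1980semimartingales} to obtain $d_k$. The only detail you gloss over is that the representation theorem there yields a universally measurable integrand, which the paper then explicitly replaces by a Borel function agreeing $\mu$-almost everywhere for $\mu=(\lambda\otimes\bbP)\circ(t,F_t)^{-1}$ --- a harmless but worthwhile technicality.
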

	\begin{proof}
		We prove this by mathematical induction. First, note that the claim holds true for $k=0$. Assume that the claim holds for $k-1\geq 0$. It should be noted that \(\bbE|g(F_T)|^p\leq C^p\bbE(1+|F_T|^r)^p\leq 2^{p-1}C^p\brak{1+\bbE|F_T|^{pr}}<\infty\) and
		\begin{align*}
			&\bbE \edg{\brak{\int_0^T|\bar f(t,F_t,u_{k-1}(s,F_s), d_{k-1}(s,F_s))|^2dt}^{p/2}}\\
			&\leq C^p\bbE \edg{\brak{\int_0^T4\brak{1+|F_t|^{2r}+|u_{k-1}(s,F_s)|^2+|d_{k-1}(s,F_s)|^2}dt}^{p/2}}\\
			&\leq 2^{p}C^pT^{p/2}\brak{1+\bbE\sup_{t\in[0,T]}|F_t|^{pr}+\bbE\sup_{t\in[0,T]}|u_{k-1}(t, F_t)|^{p}+\bbE\edg{\brak{\int_0^T|d_{k-1}(t,F_t)|^2dt}^{p/2}}}<\infty.
		\end{align*}
		Therefore, the BSDE
		\begin{align*}
			Y^{(k)}_t=g(F_T)+\int_t^T\bar f(s,F_s,u_{k-1}(s,F_s), d_{k-1}(s,F_s))ds-\int_t^TZ^{(k)}_sdB_s
		\end{align*}
		has a unique solution such that $Y^{(k)}\in\bbS^p(\bbR^d)$ and $Z^{(k)}\in\bbH^p(\bbR^{d\times n})$. Note that, because $(t,F_t)_{t\geq0}$ is a Markov process, we know
		\begin{align*}
			Y^{(k)}_t&=\bbE\edg{\left.g(F_T)+\int_t^T\bar f(s,F_s,u_{k-1}(s,F_s), d_{k-1}(s,F_s))ds\right|\scF_t}\\
			&=\bbE\edg{\left.g(F_T)+\int_t^T\bar f(s,F_s,u_{k-1}(s,F_s), d_{k-1}(s,F_s))ds\right|F_t}.
		\end{align*}
		Moreover, by Proposition II.4.6 of \cite{cinlar2011probability}, there exists a Borel measurable function $u_k:[0,T]\times\bbR^m\to\bbR^{d}$ such that \(Y^{(k)}_t=u_k(t,F_t).\)
		On the other hand, note that \[Y^{(k)}_t+\int_0^t\bar f(s,F_s,u_{k-1}(s,F_s), d_{k-1}(s,F_s))ds\] is an additive martingale. By Theorem 6.27 of \cite{cinlar1980semimartingales}, there exists a $\bar\scB$-measurable function $\bar d_k:[0,T]\times\bbR^m\to\bbR^{d\times n}$ such that
		\begin{align*}
			Y^{(k)}_t+\int_0^t\bar f(s,F_s,u_{k-1}(s,F_s), d_{k-1}(s,F_s))ds=Y^{(k)}_0+\int_0^t\bar d_k(s, F_s)dB_s.
		\end{align*}
		Here $\bar \scB$ is the $\sigma$-algebra of universally measurable sets.
		Let $G(t,\omega):=(t,F_t(\omega))$ and consider $\mu:=(\lambda\otimes\bbP)\circ G^{-1}$, where \(\lambda\) is the Lebesgue measure on \([0,T]\). Then, \(\mu\) is a finite measure on \([0,T]\times\bbR^m\); therefore, there exists a $\scB$-measurable function $d_k:[0,T]\times\bbR^m\to\bbR^{d\times n}$ such that
		\begin{align*}
			\mu\brak{\crl{(t,x)\in[0,T]\times\bbR^m: \bar d_k(t,x)\neq d_k(t,x)}}=0.
		\end{align*}
		This implies that $\bar d_k(t,F_t)=d_k(t,F_t)$ in $dt\otimes d\bbP$-almost everywhere sense.
		Therefore, the claim is proved.
	\end{proof}
	
\end{document}